     \def\section{\@startsection{section}{1}%
     \z@{.7\linespacing\@plus\linespacing}{.5\linespacing}%
     {\bfseries
     \centering
     }}
     \def\@secnumfont{\bfseries}
\newtheorem{theorem}{Theorem}[section]
\newtheorem{lemma}[theorem]{Lemma}
\newtheorem{proposition}[theorem]{Proposition}
\theoremstyle{definition}
\newtheorem{definition}[theorem]{Definition}
\theoremstyle{remark}
\newtheorem{remark}[theorem]{Remark}
\numberwithin{equation}{section} \setcounter{page}{1}
\newcommand{\R}{\mathbb{R}}
\newcommand{\md}{\operatorname{d}}
\begin{document}




\title[Symmetries of SDEs using Girsanov transformations]{ Symmetries of Stochastic Differential Equations using Girsanov transformations}


\author[F.C. De Vecchi]{\bf Francesco C. De Vecchi}
\address{Institute for Applied Mathematics and Hausdorff Center for Mathematics, Rheinische Friedrich-Wilhelms-Universit\"at Bonn.
Endenicher Allee 60, 53115 Bonn, Germany.}
\email{ francesco.deveccchi@uni-bonn.de}

\author[P. Morando]{\bf Paola Morando}
\address{DISAA, Universit\`a di Milano. Via
Celoria, 2, Milano, Italy.}
\email{paola.morando@unimi.it}

\author[S. Ugolini]{\bf Stefania Ugolini}
\address{Dipartimento di Matematica, Universit\`a di Milano. Via
Saldini 50, Milano, Italy.}
\email{ stefania.ugolini@unimi.it}

\begin{abstract}
Aiming at enlarging the class of symmetries of an SDE,  we introduce a family of stochastic transformations able to change also the underlying probability measure exploiting Girsanov Theorem and we provide new determining equations for the  infinitesimal symmetries of the SDE. The well-defined subset of the previous class of measure transformations given by Doob transformations allows us to recover all the Lie point symmetries of the Kolmogorov equation associated with the SDE. This gives the first stochastic interpretation of all the deterministic symmetries of the Kolmogorov equation. The general theory is applied to some relevant stochastic models.
\end{abstract}


\keywords{Stochastic Differential Equations, Lie symmetry analysis, Kolmogorov Equation, Girsanov transformations, Doob h-transforms}

\maketitle

\section{Introduction}

The analysis of infinitesimal symmetries of (deterministic) ordinary and partial differential equations (ODEs and PDEs) is a classical and well-developed tool in studying differential equations providing a very powerful method to both compute some of their explicit solutions or to analyze their qualitative behavior (see \cite{Bluman,Olver}  for an introduction to the subject).\\
The application of Lie symmetry analysis techniques to stochastic differential equations (SDEs) and stochastic partial differential equations (SPDEs) has received a growing interest in recent years. Without claiming to be exhaustive, from the first studies on  Brownian-motion-driven SDEs (see \cite{Co1991,AlFei1995,GaQui1999}) and Markov processes (see \cite{Glo1991,Glo1990,Liao1992,DeLara1995}) many different notions of symmetries for Brownian-motion-driven SDEs has been proposed (see the symmetries with random time change \cite{Unal2003,Srihirun2006,Fredericks2007}, W-symmetry in \cite{Ga2019,Ga2000}, weak symmetries in  \cite{DMU}, random symmetries in \cite{GaLui2017,GaSpa2017, Kozlov2018} and symmetries of diffusions in \cite{DeMoUg2019}).  Moreover, random transformations of semimartingales on Lie groups has been introduced (see \cite{AlDeMoUg2018}) and the notion of symmetry has been  extended also to SDEs driven by general semimartingales (see \cite{AlDeMoUg2019,LaOr2009}).
Furthermore the idea of reduction and reconstruction of differential equations admitting a solvable Lie algebra of symmetries has been generalized from the deterministic to the stochastic setting (see \cite{LaOr2009,Ko2010} for strong symmetries and \cite{DMU2} for weak symmetries).
Symmetries of SDEs arising from variational problems has been considered and Noether theorem and integration by quadratures have been generalized  to variational stochastic problems (see \cite{LaOr2008,PriZa2010,ThiZa1997}).
Finally,  a  generalization of differential constraint method to SPDEs  has been proposed in  \cite{De2017,DeMo2016}, while in \cite{AlDeMoUg2019,DeUg2017} symmetries have been applied to  the study  of invariant numerical methods for SDEs.\\

In the case of Brownian-motion-driven SDEs or L\'evy-process-driven SDEs, the solution process to the equation is a Markov process. This means that the mean value of a Markovian function of the process solves a deterministic linear PDE called Kolmogorov equation (whose adjoint is the also very well known Fokker-Planck equation). In this setting, a natural problem is the comparison between the set of Lie point symmetries of Kolmogorov (or Fokker-Planck) equation and the set of symmetries of the corresponding SDE (for the importance of Lie point symmetries of Kolmogorov equation in integrating the associated diffusion process see \cite{Craddock2004,Craddock2007,Craddock2009,Gungor20181,Gungor20182}). In the case of Brownian-motion-driven SDEs all the different notions of symmetry introduced for SDEs (strong symmetries, W symmetries, weak symmetries) have been proved to be also symmetries of the corresponding Kolmogorov equation, but the converse is false (see \cite{DeMoUg2019,Ga2019,GaQui1999,Ko2012,Ko2019}). In this paper we provide an appropriate extension of the notion of symmetry of an SDE so that the converse of the previous result holds.  \\

In order to do this we extend the approach first proposed in \cite{DMU}, where the concept of weak symmetry of an SDE has been introduced considering  a three components stochastic transformation for the weak solutions to a Brownian-motion-driven SDE. Indeed, this stochastic transformation is composed by a deterministic transformation of the state variable $X$, a random time change and a random rotation of the Brownian motion $W$. Since the set of these stochastic transformations forms an infinite dimensional Lie group, it is possible to consider the associated Lie algebra, whose elements are called infinitesimal stochastic transformations. The infinitesimal stochastic transformations constitute the infinitesimal generators of the finite stochastic transformations.
When the  process $P_T(X,W)$, obtained transforming a weak solution $(X,W)$ to an SDE through the stochastic transformation $T$, is again a (weak) solution to the same SDE, we say that $T$ is a (finite) weak symmetry for the SDE. The same notion can be extended to the case of infinitesimal transformations, and this allows us to obtain explicitly a set of determining equations characterizing the infinitesimal symmetries of an SDE.\\

In the present paper we enlarge the previous class of stochastic transformations introducing a new component which represents the possibility of changing the underlying  probability measure of the solution process $(X,W)$ via the use of Girsanov theorem. We call this kind of transformations \emph{extended stochastic transformations} and the corresponding symmetries \emph{extended (weak) symmetries}.\\
The first result of the paper is, thus, the introduction of this new family of stochastic  transformations and the study of their geometric properties (they form an infinite dimensional Lie group with associated Lie algebra). \\
The second main result is that for any infinitesimal Lie point symmetry of Kolmogorov equation there exists an infinitesimal extended (weak) symmetry of the corresponding SDE. Furthermore, the family of infinitesimal extended weak symmetries of the SDE is larger than the set of Lie point symmetries of the corresponding Kolmogorov equation. Indeed, we find that any extended weak symmetry of the SDE that is also a symmetry of the Kolmogorov equation is associated with a \emph{Doob change of measure}. Since Doob transformations form a strict subset of the class of absolutely continuous changes of measure which can be obtained within a Girsanov approach,  we prove that the extended weak symmetries of an SDE are in general more than the  Lie point symmetries of the Kolmogorov equation. To the best of our knowledge, these results  represent the first purely stochastic interpretation of the whole set of symmetries of Kolmogorov equations.
In particular, for one-dimensional Brownian motion,  our stochastic symmetry analysis permits us to individuate not only  all the symmetries of the one-dimensional heat equation, but also a new infinite dimensional family of symmetries associated with the random time invariance of the process (see Remark 5.1). Analogously, for two-dimensional Brownian motion, besides the symmetries of the corresponding two-dimensional heat equation, we find two new infinite dimensional families of symmetries: the first one is related to  random time invariance  and the second one is related to  random rotations invariance of the process (see Remark 5.2).\\

The plan of the paper is the following: in Section \ref{section_1} we briefly recall the symmetry analysis of the Brownian-motion-driven SDEs given in \cite{DMU}. In Section \ref{section_2} we introduce the fourth transformation of an SDE which allows the change of the underlying probability measure into another equivalent probability measure and we show how the new global stochastic transformation acts on the solution process and on the SDE coefficients. We also provide a geometrical description of the group of stochastic transformations and we derive the new determining equations for the infinitesimal symmetries. In Section \ref{section_3} we compare the extended weak symmetries of an SDE with the  Lie point symmetries of the corresponding Kolmogorov equation, proving that the two notions coincide as long as we consider extend weak symmetries with a Doob change of measure. In Section \ref{section_4} the theoretical results of the paper are applied to a list of  SDEs which are  important both from a theoretical point of view and for their  applications.

\section{A three components stochastic transformation of an SDE \label{section_1}}

Since in this paper we generalize the stochastic transformations introduced in \cite{DMU} and \cite{DMU2} for Brownian-motion-driven SDEs, for the convenience of the reader we collect in this section
the main facts and definitions that can be useful in the following. A more detailed exposition, as well as all the proofs, can be found in  \cite{DMU} and \cite{DMU2}. In the following, the Einstein summation convention on repeated indices is used.\\

Let $M, M'$ be  open subsets of $\mathbb{R}^n$. We consider the filtered probability space $\left(\Omega,\mathcal{F}, \mathcal{F}_t ,\mathbb{P}\right)$, where $\mathcal{F}_t \subset \mathcal{F}$ is a filtration. We fix a finite time horizon $[0,\mathcal{T}]$ with $\mathcal{T}\geq0$.  Let $W=\left(W^1,\dots,W^m\right)=\left(W^\alpha \right)$ be an  $\mathcal{F}_t$-$m$-dimensional Brownian motion and let $\mu : M \to \mathbb{R}^n$ and $\sigma : M \to \text{Mat}\left(n,m\right)$ be two smooth functions.
\begin{definition}
The process $(X,W)$ solves (in a weak sense) the SDE with coefficients $\mu,\sigma$ (shortly solves the SDE $\left(\mu,\sigma\right)$) if, for all $t \in [0,\mathcal{T}]$,
\begin{equation*}
X_{t}^i -X_0^i=\int_{0}^{t  } \mu ^i \left(X_s\right)ds + \int_{0}^{t }\sigma _\alpha ^i \left(X_s\right)dW_s^\alpha \quad i=1,\dots, n.
\end{equation*}
When $\left(X,W\right)$ solves the SDE $\left(\mu,\sigma\right)$ we write, as usual
\begin{equation}\label{eqSDE}
\begin{split}
dX_t &= \mu\left(X_t\right)dt+\sigma\left(X_t\right)\cdot dW_t\\
&=\mu dt+ \sigma \cdot d W_t
\end{split}
\end{equation}
\end{definition}
In order to ensure that the integrals in the above definition are well defined we suppose that the processes $(|\mu\left(X_s\right)|^{\frac{1}{2}})_{s\in[0,\mathcal{T}]}$ and $\left(\sigma\left(X_s\right)\right)_{s\in[0,\mathcal{T}]}$ belong to the class $M^2_{loc}([0,\mathcal{T}])$, i.e. to the class of processes  $\left(Y_s\right)_{s\in[0,\mathcal{T}]}$ that are progressively measurable  and such that $\int_{0}^{t}Y_s^2\left(\omega \right)ds < \infty \text{ for almost every   } \omega \in \Omega$ and $t\in[0,\mathcal{T}]$.
We recall that, given an SDE $(\mu, \sigma)$, we can introduce the corresponding infinitesimal generator given by
\begin{equation}\label{eqGENERATORL}
L= \frac{1}{2}\left(\sigma \sigma^T\right)^{ij} \partial_i \partial_j+ \mu^i \partial_i.
\end{equation}
 In the following we only consider autonomous SDEs, and we introduce three different transformations for
 their solution processes.

\subsection*{Spatial transformations }

We call spatial transformation of an SDE a  transformation $\Phi$ acting on the process component $X$ through a generic diffeomorphism  $\Phi:M \to M'$.
Let us denote with $\nabla \Phi : M \to \text{Mat}\left(n,m\right)$ the Jacobian matrix
\begin{equation*}
	 (\nabla \Phi)_j^i=\partial_j \Phi^i.
\end{equation*}

By using the well-known It\^o formula (see, e.g., \cite{RoWi2000} Section 32 or \cite{Oksendal} Chapter 4) it is easy to compute how the coefficients of the SDE change.
\begin{proposition}\label{Prop1}
	Given a diffeomorphism $\Phi : M \to M'$, if the process $\left(X,W\right)$ is solution to the  SDE $\left(\mu,\sigma\right)$, then the process $\left(\Phi\left(X\right),W\right)$ is solution to the SDE $\left(\mu',\sigma'\right)$ with
\begin{equation*}
\begin{split}
\mu'&= L\left(\Phi\right)\circ \Phi^{-1}\\
\sigma'&=\left(\nabla \Phi \cdot \sigma\right)\circ\Phi^{-1}
\end{split}
\end{equation*}
\end{proposition}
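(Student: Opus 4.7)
The plan is to apply It\^o's formula to the process $Y_t := \Phi(X_t)$ component by component and then simply read off the drift and diffusion coefficients from the resulting semimartingale decomposition.

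First, I would write It\^o's formula for each coordinate $Y_t^i = \Phi^i(X_t)$. Because $\Phi$ is a smooth diffeomorphism between open subsets of $\mathbb{R}^n$ and $X$ is a continuous semimartingale, this gives
\begin{equation*}
dY_t^i \;=\; \partial_j \Phi^i(X_t)\,dX_t^j \;+\; \tfrac{1}{2}\,\partial_j\partial_k \Phi^i(X_t)\,d\langle X^j, X^k\rangle_t.
\end{equation*}
Substituting the SDE $(\mu,\sigma)$ for $dX_t^j$ and using the standard computation
\begin{equation*}
d\langle X^j, X^k\rangle_t \;=\; (\sigma \sigma^T)^{jk}(X_t)\,dt,
\end{equation*}
the $dt$-terms combine into
\begin{equation*}
\Bigl(\mu^j\,\partial_j \Phi^i + \tfrac{1}{2}(\sigma\sigma^T)^{jk}\,\partial_j \partial_k \Phi^i\Bigr)(X_t)\,dt
\;=\; L(\Phi^i)(X_t)\,dt,
\end{equation*}
by the very definition \eqref{eqGENERATORL} of the generator $L$. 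The $dW$-terms read $\partial_j \Phi^i(X_t)\,\sigma^j_\alpha(X_t)\,dW_t^\alpha = (\nabla\Phi\cdot\sigma)^i_\alpha(X_t)\,dW_t^\alpha$.

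Second, I would rewrite everything as functions of $Y_t$ rather than of $X_t$. Since $\Phi$ is a diffeomorphism, $X_t = \Phi^{-1}(Y_t)$ almost surely for every $t$, so both the drift vector $L(\Phi)(X_t)$ and the diffusion matrix $(\nabla\Phi\cdot\sigma)(X_t)$ can be expressed as $L(\Phi)\circ\Phi^{-1}(Y_t)$ and $(\nabla\Phi\cdot\sigma)\circ\Phi^{-1}(Y_t)$, respectively. This yields exactly the claimed SDE $(\mu',\sigma')$ for the pair $(\Phi(X),W)$ on the same filtered probability space with the same Brownian motion $W$, which is what weak solution means in this context.

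The only care required is to check that the integrability conditions stated after the definition of weak solution remain satisfied after the transformation, so that the stochastic and Lebesgue integrals appearing in the transformed SDE are well defined; this is routine since $\Phi$, $\nabla\Phi$ and $\nabla^2\Phi$ are continuous on $M$ and hence bounded on compacts, and $X$ has continuous sample paths. I do not foresee a genuine obstacle: the statement is essentially a coordinate-change formula, and the argument is a direct bookkeeping of the It\^o correction term.
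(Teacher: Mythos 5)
Your proposal is correct and follows exactly the route the paper indicates: apply It\^o's formula to $\Phi(X_t)$, identify the $dt$-coefficient with $L(\Phi)$ and the $dW$-coefficient with $\nabla\Phi\cdot\sigma$, and compose with $\Phi^{-1}$ to express everything in the transformed variable. The paper itself only cites It\^o's formula and defers the details to the earlier references, so your bookkeeping of the correction term is precisely the intended argument.
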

We remark that the spatial transformation does not act on the Brownian motion.

\subsection*{Random time changes }

In the following we consider only Markovian absolutely continuous random time changes of the solution process to an SDE through a smooth and strictly positive density
 $\eta : M \to \mathbb{R}_+$. We denote by $H_\eta $ the transformation
$$ t'=\int_0^t \eta_s(X_s)ds.$$
This time transformation necessarily acts on both components of the solution process $(X,W)$.
In particular, given a  Brownian motion $W_t$ and denoting by $W'_t$  the stochastic process solution to

\begin{equation*}
	dW'_t=\sqrt{\eta\left(X_t\right)}dW_t,
\end{equation*}
it is possible to prove that $H_\eta\left(W'\right)$ is also a Brownian motion.

The total action of the random time change transformation on the coefficients of an SDE is described by the following proposition.

\begin{proposition}\label{Prop2}
Let $\eta : M \to \mathbb{R}_+$ be a smooth and strictly positive function and let $\left(X,W\right)$ be a solution to the SDE $\left(\mu,\sigma\right)$. Then the process $\left(H_\eta\left(X\right),H_\eta \left(W'\right)\right)$ is solution to the SDE $\left(\mu',\sigma'\right)$ with
	\begin{equation*}
	\begin{split}
	\mu'&=\frac{1}{\eta}\mu\\
	\sigma'&=\frac{1}{\sqrt{\eta}}\sigma.
	\end{split}
	\end{equation*}
\end{proposition}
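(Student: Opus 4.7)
The plan is to pass from the SDE for $(X,W)$ to its time-changed version in three stages: first rewrite the It\^o integral against $W$ as one against $W'$; then evaluate at $t=\tau_{t'}$, where $\tau$ is the pathwise inverse of $H_\eta$; finally, identify the resulting integrators with $H_\eta(X)$ and $H_\eta(W')$. I will set $\tilde X_{t'}:=X_{\tau_{t'}}=H_\eta(X)_{t'}$ and $\tilde W_{t'}:=W'_{\tau_{t'}}=H_\eta(W')_{t'}$; the text already records that $\tilde W$ is a Brownian motion in the time-changed filtration, which is exactly L\'evy's characterization applied to $[\tilde W]_{t'}=\int_0^{\tau_{t'}}\eta(X_s)\,ds=t'$.

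The defining relation $dW'_t=\sqrt{\eta(X_t)}\,dW_t$ means, in integral form, that for any progressively measurable integrand $H$ of suitable integrability, $\int_0^t H_s\,dW_s$ and $\int_0^t H_s/\sqrt{\eta(X_s)}\,dW'_s$ are both continuous local martingales vanishing at $0$ with the same quadratic variation $\int_0^t H_sH_s^\top\,ds$, so they coincide. Taking $H=\sigma(X)$ recasts the SDE as
\begin{equation*}
X_t=X_0+\int_0^t\mu(X_s)\,ds+\int_0^t\frac{\sigma(X_s)}{\sqrt{\eta(X_s)}}\,dW'_s.
\end{equation*}
Evaluating at $t=\tau_{t'}$, the Lebesgue integral is treated by the pathwise change of variable $s=\tau_{u'}$, whose density is $d\tau_{u'}/du'=1/\eta(\tilde X_{u'})$, producing $\int_0^{t'}\mu(\tilde X_{u'})/\eta(\tilde X_{u'})\,du'$. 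The It\^o integral is treated by the time-change theorem for stochastic integrals against a continuous local martingale (e.g.\ Revuz--Yor, Chapter V), which yields
\begin{equation*}
\int_0^{\tau_{t'}}\frac{\sigma(X_s)}{\sqrt{\eta(X_s)}}\,dW'_s=\int_0^{t'}\frac{\sigma(\tilde X_{u'})}{\sqrt{\eta(\tilde X_{u'})}}\,d\tilde W_{u'}.
\end{equation*}
Adding the two contributions gives exactly the SDE with coefficients $\mu'=\mu/\eta$ and $\sigma'=\sigma/\sqrt{\eta}$ driven by $\tilde W$, which is the claim.

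The main obstacle is the rigorous use of the stochastic-integral time-change theorem, which demands that every $\tau_{t'}$ be a stopping time in the original filtration and that measurability and $M^2_{loc}$-integrability pass through the substitution. These checks are essentially automatic in our setting: $\eta>0$ smooth together with continuity and adaptedness of $X$ make $H_\eta$ continuous strictly increasing and adapted, so $\tau_{t'}=\inf\{t\ge 0:H_\eta(t)\ge t'\}$ is a finite stopping time and $u'\mapsto\tau_{u'}$ is pathwise absolutely continuous; measurability after substitution is inherited from the continuity of the integrand in $X$; and local boundedness of $\eta$ and $1/\eta$ along paths of $X$ ensures that the $M^2_{loc}$ class is preserved. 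Beyond this bookkeeping, the argument reduces to the two identities displayed above.
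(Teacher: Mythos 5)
Your argument is the standard time-change proof and matches in substance what the paper relies on (the result is stated without an in-line proof and deferred to \cite{DMU}): rewrite the integral against $W$ as one against $W'$, substitute $t=\tau_{t'}$ in the Lebesgue integral, and invoke the time-change theorem for stochastic integrals together with L\'evy's characterization for $\tilde W$. One local flaw: to identify $\int_0^t H_s\,dW_s$ with $\int_0^t H_s/\sqrt{\eta(X_s)}\,dW'_s$ you argue that two continuous local martingales null at $0$ with the same quadratic variation coincide, which is false in general (consider $W$ and $-W$); the correct one-line justification is associativity of the It\^o integral, $\int_0^t G_s\,dW'_s=\int_0^t G_s\sqrt{\eta(X_s)}\,dW_s$ with $G_s=H_s/\sqrt{\eta(X_s)}$, or equivalently checking that the quadratic variation of the \emph{difference} vanishes, which requires also computing the cross-variation. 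With that repair, and noting that in the multidimensional case L\'evy's characterization also needs the vanishing of the cross-variations $[\tilde W^\alpha,\tilde W^\beta]_{t'}=\delta^{\alpha\beta}t'$ (immediate from $B$ being absent here and $W$ being a Brownian motion), the proof is complete.
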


\subsection*{Random rotations}

Since Brownian motion is invariant with respect to random rotations (see \cite{DMU} and \cite{AlDeMoUg2018} for the general concept of gauge transformations), and the solution to the SDE is understood in a weak sense, an additional possible   transformation of the SDE consists in a random rotation of the driving Brownian motion.

By using the L\'evy characterization of Brownian motion the following result states how an SDE is modified by a random rotation of its driving process.

\begin{proposition}
	Let $B : M \to SO \left(m\right)$ be a smooth function and let $\left(X,W\right)$ be a solution to the SDE $\left(\mu,\sigma\right)$. Then $\left(X,W'\right)$, where
	\begin{equation*}
	dW'_t=B\left(X_t\right)\cdot dW_t,
	\end{equation*}
	is a solution to the SDE $\left(\mu',\sigma'\right)$ with
	\begin{equation*}
	\begin{split}
	\mu'&=\mu,\\
	\sigma'&=\sigma \cdot B^{-1}.
	\end{split}
	\end{equation*}
	
\end{proposition}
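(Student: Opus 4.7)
The plan is to invoke Lévy's characterization of Brownian motion, as hinted in the text immediately preceding the statement, to verify that the rotated process $W'$ is itself an $m$-dimensional Brownian motion with respect to the filtration $\mathcal{F}_t$, and then to rewrite the original SDE as an SDE driven by $W'$ in order to read off the new coefficients $(\mu',\sigma')$.

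First I would show that $W'$ is a Brownian motion. By construction, $W'_t=\int_0^t B(X_s)\cdot dW_s$ is a continuous local martingale starting at zero, since $B$ is smooth and, composed with $X$, is locally bounded and progressively measurable (so the entries of $B(X_s)$ lie in $M^2_{loc}$). Its quadratic covariation may be computed componentwise as
\begin{equation*}
\langle W'^{\alpha},W'^{\beta}\rangle_t
=\int_0^t B^\alpha_\gamma(X_s)\,B^\beta_\delta(X_s)\,d\langle W^\gamma,W^\delta\rangle_s
=\int_0^t (BB^T)^{\alpha\beta}(X_s)\,ds
=\delta^{\alpha\beta}\,t,
\end{equation*}
where the final equality uses that $B(X_s)\in SO(m)$ and hence $BB^T=I$. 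Lévy's theorem then yields that $W'$ is an $\mathcal{F}_t$-Brownian motion.

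Second, since $B(X_t)$ is invertible with $B^{-1}=B^T$, I can left-multiply $dW'_t=B(X_t)\cdot dW_t$ by $B^{-1}(X_t)$ to obtain $dW_t=B^{-1}(X_t)\cdot dW'_t$. Substituting into the original SDE \eqref{eqSDE} gives
\begin{equation*}
dX_t=\mu(X_t)\,dt+\sigma(X_t)\cdot B^{-1}(X_t)\cdot dW'_t,
\end{equation*}
from which I read off $\mu'=\mu$ and $\sigma'=\sigma\cdot B^{-1}$. The drift is unaffected precisely because the rotation acts only on the martingale part of the semimartingale decomposition, in contrast with Propositions \ref{Prop1} and \ref{Prop2}, where the state transformation produces It\^o correction terms and the time change rescales both drift and diffusion.

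There is essentially no obstacle: the only nontrivial input is Lévy's characterization. The routine verifications are that $B^{-1}(X_s)\sigma(X_s)$ remains in $M^2_{loc}$ (immediate from smoothness of $B^{-1}=B^T$ together with the corresponding property of $\sigma(X_s)$) and that the stochastic integral against $W'$ agrees pathwise with the one against $W$ after the substitution, which follows from the associativity of the It\^o integral for locally bounded integrands.
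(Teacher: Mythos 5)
Your proof is correct and follows exactly the route the paper indicates (and which is carried out in the reference \cite{DMU}): use L\'evy's characterization, via the quadratic covariation computation with $BB^T=I$, to show that $W'$ is an $\mathcal{F}_t$-Brownian motion, then invert $dW'_t=B(X_t)\cdot dW_t$ and substitute into the SDE to read off $\mu'=\mu$ and $\sigma'=\sigma\cdot B^{-1}$. No gaps; the remarks on integrability and associativity of the stochastic integral are the right routine checks.
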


\subsection*{Stochastic transformations}

Collecting the  natural transformations introduced above, we can  give the following definition.

\begin{definition}\label{stochastic_transformation}
Given a diffeomorphism $\Phi:M \rightarrow M'$ and two smooth functions  $B:M \rightarrow SO(m)$ and  $\eta:M \rightarrow \mathbb{R}_+$, the triad $T=(\Phi,B,\eta)$ is called a \emph{stochastic transformation}.
\end{definition}
We can distinguish two actions of the stochastic transformation $T=(\Phi,B,\eta)$: one on the solution processes  and the other one on the coefficients of an SDE.
\begin{definition}\label{definition1}
If $(X,W)$ is
a solution process,  we define the transformed process $P_T(X,W)=(X',W')$ as
\begin{eqnarray*}
X'=\Phi(H_{\eta}(X))\\
W'=H_{\eta}(\tilde{W}),
\end{eqnarray*}
where
$$\tilde{W}^{\alpha}_t=\int_0^t{ B^{\alpha}_{\beta}(X_s) dW^{\beta}_s}.$$
\end{definition}
\begin{definition}\label{definition2}
If $(X,W)$ is a solution process of an SDE $(\mu, \sigma)$, we define the transformed SDE $E_T(\mu,\sigma)=(\mu',\sigma')$ as
\begin{eqnarray*}
\mu'&=&\left(\frac{1}{\eta}L(\Phi)\right) \circ \Phi^{-1}\\
\sigma'&=&\left(\frac{1}{\sqrt{\eta}} \nabla\Phi \cdot \sigma \cdot B^T \right) \circ \Phi^{-1}.
\end{eqnarray*}
\end{definition}
By composing the previous propositions one can prove the following result.
\begin{proposition}\label{Prop3}
If $T$ is a stochastic transformation and $(X,W)$ is a solution to the SDE $(\mu,\sigma)$, then $P_T(X,W)$ is a solution to the SDE $E_T(\mu,\sigma)$.
\end{proposition}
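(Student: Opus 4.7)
The plan is to prove the statement by composing the three elementary transformations already established in this section (random rotation, random time change, and spatial diffeomorphism) in a fixed order. The definitions of $P_T$ (Definition \ref{definition1}) and $E_T$ (Definition \ref{definition2}) are set up precisely so that the composite action factors through these three elementary pieces, reducing the proof to applying the three component propositions in sequence and verifying that the resulting coefficients coincide with the closed-form expressions for $E_T(\mu,\sigma)$.

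Concretely, starting from a weak solution $(X,W)$ of $(\mu,\sigma)$, I would first apply the random rotation by $B$: the rotation proposition yields that $(X,\tilde W)$, with $\tilde W^\alpha_t = \int_0^t B^\alpha_\beta(X_s)\, dW^\beta_s$, solves the SDE $(\mu,\, \sigma \cdot B^{-1})$. Next I would apply the random time change $H_\eta$ via Proposition \ref{Prop2}, obtaining $(H_\eta(X), H_\eta(\tilde W))$ as a weak solution of $(\mu/\eta,\, \sigma B^{-1}/\sqrt{\eta})$. Finally I would apply the spatial diffeomorphism $\Phi$ via Proposition \ref{Prop1}, producing $(\Phi(H_\eta(X)), H_\eta(\tilde W)) = P_T(X,W)$ as a solution of the SDE whose coefficients are obtained by applying the spatial-transformation formula to the intermediate SDE.

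Matching these coefficients with Definition \ref{definition2} relies on two elementary observations. First, since $B(x) \in SO(m)$ one has $B^{-1} = B^T$, so $(\nabla\Phi \cdot \sigma \cdot B^{-1}/\sqrt{\eta})\circ \Phi^{-1} = (\nabla\Phi \cdot \sigma \cdot B^T/\sqrt{\eta})\circ \Phi^{-1}$, matching the expression for $\sigma'$. Second, the generator $L'$ of the intermediate SDE $(\mu/\eta,\, \sigma B^{-1}/\sqrt{\eta})$ equals $(1/\eta) L$, because the rotation collapses in the second-order part via $\sigma B^{-1}(B^{-1})^T \sigma^T = \sigma\sigma^T$, while the drift already carries the factor $1/\eta$; consequently $L'(\Phi)\circ \Phi^{-1} = (L(\Phi)/\eta)\circ \Phi^{-1}$, matching the expression for $\mu'$. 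The only point requiring care is that the hypotheses of each elementary proposition persist after the preceding step --- in particular that $\tilde W$ remains a Brownian motion (L\'evy's characterization), that $H_\eta(\tilde W)$ is again a Brownian motion in the transformed time scale, and that the $M^2_{loc}$ integrability conditions are preserved along the chain --- but these are exactly the contents of the component propositions themselves, so the remaining argument reduces to the algebraic bookkeeping above.
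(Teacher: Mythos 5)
Your proposal is correct and takes exactly the route the paper intends: Proposition \ref{Prop3} is justified there only by the phrase ``by composing the previous propositions,'' and your chain (rotation, then time change, then spatial map), together with the observations that $B^{-1}=B^{T}$ collapses the second-order part so that the intermediate generator is $\tfrac{1}{\eta}L$, is precisely that composition with the bookkeeping made explicit. The resulting coefficients match Definition \ref{definition2}, so nothing further is needed.
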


Since the set of all stochastic transformations is a group with  composition law given by
$$T' \circ T=(\Phi' \circ \Phi,(B'\circ \Phi) \cdot B, (\eta' \circ \Phi ) \eta)$$
and unit $1_M=(id_M,I_m, 1)$, we can consider a one parameter group $T_a$ of stochastic transformations such that $T_a \circ T_b=T_{a+b}$ and $T_0=1_M$ and the corresponding infinitesimal generator $V$.
\begin{definition}
A triad $V=(Y,C,\tau)$, where  $ Y $ is a vector field on $ M $, $ C :M \to \mathfrak{so}\left(m\right) $ and $ \tau :M \to \mathbb{R} $ are smooth functions, is called an \emph{infinitesimal stochastic transformation} if
\begin{equation}\label{equation_infinitesimal_SDE2}
\begin{aligned}
Y(x)&=\partial_a(\Phi_a(x))|_{a=0}&\\
C(x)&=\partial_a(B_a(x))|_{a=0}&\\
\tau(x)&=\partial_a(\eta_a(x))|_{a=0}&
\end{aligned}
\end{equation}
\end{definition}
On the other side, considering the triad  $(Y, C,\tau)$ defined as above, the solution $\left(\Phi_a,B_a,\eta_a\right)$ to the equations
\begin{equation}
\begin{aligned}
\partial_a\left(\Phi_a\left(x\right)\right)&=Y\left(\Phi_a\left(x\right)\right)\\
\partial_a\left(B_a\left(x\right)\right)&=C\left(\Phi_a\left(x\right)\right)\cdot B_a \left(x\right)\\
\partial_a\left(\eta_a\left(x\right)\right)&=\tau\left(\eta_a\left(x\right)\right) \eta_a\left(x\right)
\end{aligned}
\end{equation}
with initial data $\Phi_0\left(x\right)=id_M\left(x\right)$, $B_0\left(x\right)=I_m$, $\eta_0\left(x\right)=1$, is a one-parameter group of stochastic transformations.

\subsection*{Weak symmetries and determining equations}

The introduction of a (global) stochastic transformation of the solution process to an SDE allows us  to  provide a good definition of (weak) symmetry for the SDE.

\begin{definition}
A stochastic transformation  $T$ is a \emph{(finite weak) symmetry} of an SDE $(\mu,\sigma)$ if, for every solution process $(X,W)$, $P_T(X,W)$ is a solution process to the same SDE.
\end{definition}

The following characterization of the symmetry property in terms of the coefficients of the SDE holds.

\begin{proposition}\label{Prop4}
A  stochastic transformation $T=\left(\Phi,B,\eta\right)$ is a (finite weak) symmetry of an  SDE $\left(\mu,\sigma\right)$ if and only if	
\begin{equation}\label{carCoef}
\begin{split}
\left(\frac{1}{\eta}L\left(\Phi\right)\right)\circ \Phi^{-1}=&\mu \\
\left(\frac{1}{\sqrt{\eta}}\nabla\Phi \cdot \sigma \cdot B^{-1}\right)\circ\Phi^{-1}&=\sigma.
\end{split}
\end{equation}
\end{proposition}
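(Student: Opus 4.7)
The plan is to reduce the characterization to Proposition \ref{Prop3} (the composition result $P_T$ versus $E_T$). The two displayed identities in \eqref{carCoef} are exactly the statement that $E_T(\mu,\sigma)=(\mu,\sigma)$ under the formulas of Definition \ref{definition2}, so the proposition amounts to saying that $T$ is a symmetry if and only if $E_T$ fixes $(\mu,\sigma)$.

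The ``if'' direction is immediate. Assume \eqref{carCoef}; by Definition \ref{definition2} this says $E_T(\mu,\sigma)=(\mu,\sigma)$. For any weak solution $(X,W)$ of $(\mu,\sigma)$, Proposition \ref{Prop3} asserts that $P_T(X,W)$ solves $E_T(\mu,\sigma)$, which is $(\mu,\sigma)$ itself, so $T$ is a finite weak symmetry.

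For the ``only if'' direction, suppose $T$ is a symmetry and let $(X,W)$ solve $(\mu,\sigma)$. Set $(X',W'):=P_T(X,W)$; by Proposition \ref{Prop3} this process solves $E_T(\mu,\sigma)=(\mu',\sigma')$, while by the symmetry hypothesis it also solves $(\mu,\sigma)$. Subtracting the two integral representations yields
\begin{equation*}
\int_0^t\bigl(\mu(X'_s)-\mu'(X'_s)\bigr)\,ds+\int_0^t\bigl(\sigma(X'_s)-\sigma'(X'_s)\bigr)\cdot dW'_s=0
\end{equation*}
for every $t\in[0,\mathcal{T}]$. The uniqueness of the semimartingale decomposition forces the finite-variation and local-martingale parts to vanish separately, and the It\^o isometry then gives $\sigma(X'_s)=\sigma'(X'_s)$ and subsequently $\mu(X'_s)=\mu'(X'_s)$ for almost every $(s,\omega)$.

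The delicate point, which I expect to be the main obstacle, is to upgrade this equality along a single trajectory to a pointwise equality $\mu=\mu'$ and $\sigma=\sigma'$ on all of $M$. The cleanest route is to note that weak solutions to $(\mu,\sigma)$ may be started from an arbitrary point $x_0\in M$, and the marginal law of $X'_s$ for small $s>0$ charges every neighborhood of $x_0$; together with the continuity of $\mu,\sigma,\mu',\sigma'$ this forces $\mu(x_0)=\mu'(x_0)$ and $\sigma(x_0)=\sigma'(x_0)$. Letting $x_0$ range over $M$ gives $\mu'=\mu$ and $\sigma'=\sigma$ everywhere, which is \eqref{carCoef} via Definition \ref{definition2}. (Equivalently, one may feed $(X',W')$ into It\^o's formula for a family of test functions to recover the infinitesimal generator pointwise, obtaining $L=L'$ and thus $\mu=\mu'$ and $\sigma\sigma^T=\sigma'(\sigma')^T$; matching $\sigma=\sigma'$ then follows from the fact that the two SDEs share the same driving Brownian motion $W'$.)
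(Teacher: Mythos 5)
Your proof is correct and matches the standard argument: the paper itself recalls this proposition without proof (deferring to \cite{DMU}), and the proof there proceeds exactly as you propose --- the ``if'' direction is immediate from Proposition \ref{Prop3}, and the ``only if'' direction identifies the coefficients of the two SDEs solved by $P_T(X,W)$ via uniqueness of the semimartingale decomposition, the vanishing of the quadratic variation of the martingale part, and the arbitrariness of the initial point. The only cosmetic slip is that the pointwise identification is obtained at $\Phi(x_0)$ rather than at $x_0$ (since $X'_0=\Phi(x_0)$), which is harmless because $\Phi$ is a diffeomorphism onto $M'$ and the identities \eqref{carCoef} are precisely statements on $M'$ after composition with $\Phi^{-1}$.
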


An infinitesimal stochastic transformation $V$  generating a one parameter group $T_a$ is called an \emph{infinitesimal symmetry} of the SDE $(\mu, \sigma)$ if $T_a$ is a (finite) symmetry of $(\mu, \sigma)$.

A useful characterization of the infinitesimal symmetries of an SDE is provided by the following statement.

\begin{theorem}\label{determiningeqs}
An infinitesimal stochastic transformation $V$ is an (infinitesimal weak) symmetry of the SDE $(\mu,\sigma)$ if and only if  $V$ generates a one-parameter group on $M$ and the following \emph{determining equations} hold
\begin{eqnarray}\label{determining_eq}
[Y,\sigma]&=&-\frac{1}{2} \tau \sigma-\sigma \cdot C\\
Y(\mu)-L(Y)&=&-\tau \mu,
\end{eqnarray}
where $[Y, \sigma]^i_{\alpha}= Y(\sigma^i_{\alpha})- \partial_k(\Phi^i)\sigma_{\alpha}^k$.

\end{theorem}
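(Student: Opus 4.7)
The plan is to differentiate the finite--symmetry characterisation of Proposition~\ref{Prop4} along the one--parameter group $T_a=(\Phi_a,B_a,\eta_a)$ generated by $V=(Y,C,\tau)$. Composing both conditions of Proposition~\ref{Prop4} with $\Phi_a$ on the right, one obtains the equivalent identities
\begin{equation*}
\frac{1}{\sqrt{\eta_a}}\,\nabla\Phi_a\cdot\sigma\cdot B_a^{-1}=\sigma\circ\Phi_a,\qquad
\frac{1}{\eta_a}\,L(\Phi_a)=\mu\circ\Phi_a,
\end{equation*}
which are now genuine identities between smooth functions on $M$ and can be differentiated pointwise in $a$.

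For the necessity direction I would evaluate at $a=0$ using $\Phi_0=\operatorname{id}_M$, $B_0=I_m$, $\eta_0=1$, together with $\partial_a\Phi_a|_0=Y$, $\partial_a B_a|_0=C$, $\partial_a\eta_a|_0=\tau$; differentiating the two normalising factors and the inverse of $B_a$ yields $\partial_a B_a^{-1}|_0=-C$ and $\partial_a\eta_a^{-1/2}|_0=-\tfrac12\tau$. For the diffusion identity, the left--hand side differentiates to $-\tfrac12\tau\sigma+(\nabla Y)\cdot\sigma-\sigma\cdot C$, the right--hand side to $Y(\sigma)$ by the chain rule; rearranging and recognising the bracket $[Y,\sigma]^i_\alpha=Y(\sigma^i_\alpha)-(\partial_k Y^i)\sigma^k_\alpha$ gives the first determining equation. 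For the drift identity, $\partial_a L(\Phi_a)|_0=L(Y)$ (since $L$ acts componentwise as a second--order linear operator and $\Phi_0=\operatorname{id}$) and $\partial_a\eta_a^{-1}|_0=-\tau$; combined with $\partial_a(\mu\circ\Phi_a)|_0=Y(\mu)$ this produces the second determining equation.

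For sufficiency I would invoke the one--parameter group structure. Set $(\mu_a,\sigma_a):=E_{T_a}(\mu,\sigma)$ and observe the functorial identity $E_{T'\circ T}=E_{T'}\circ E_T$, which follows from applying Proposition~\ref{Prop3} successively to the group law $T_{a+h}=T_h\circ T_a$. This gives $(\mu_{a+h},\sigma_{a+h})=E_{T_h}(\mu_a,\sigma_a)$, and differentiating in $h$ at $h=0$ yields the autonomous ODE $\partial_a(\mu_a,\sigma_a)=\mathcal{G}_V(\mu_a,\sigma_a)$, where $\mathcal{G}_V$ is the vector field on the space of SDE coefficients whose vanishing at a point is exactly the determining equations at that point. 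The determining equations say precisely that $(\mu,\sigma)$ is an equilibrium of this ODE, hence by uniqueness the constant curve $(\mu_a,\sigma_a)\equiv(\mu,\sigma)$ is the solution, and Proposition~\ref{Prop4} then ensures that every $T_a$ is a finite symmetry.

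The only subtle step is the sufficiency argument: one must verify the functoriality $E_{T'\circ T}=E_{T'}\circ E_T$ (which is really the content of the composition law for stochastic transformations already recalled above Definition~\ref{stochastic_transformation}) so that the infinitesimal relations propagate to an autonomous ODE on coefficients. Once this is in place, everything reduces to routine differentiation and uniqueness of ODE solutions; the only source of computational care is the sign bookkeeping arising from $\eta_a^{-1/2}$ and $B_a^{-1}$.
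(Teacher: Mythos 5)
Your necessity argument is exactly the paper's method: the paper does not prove Theorem~\ref{determiningeqs} itself (it defers to \cite{DMU}), but its proof of the extended analogue, Theorem~\ref{theorem_determining}, proceeds precisely as you do --- compose the finite conditions of Proposition~\ref{Prop4} with $\Phi_a$, differentiate at $a=0$, and read off the two equations; your sign bookkeeping for $\eta_a^{-1/2}$, $B_a^{-1}$ and $L(\Phi_0)=\mu$ is correct, and you correctly read the bracket as $[Y,\sigma]^i_\alpha=Y(\sigma^i_\alpha)-\partial_k(Y^i)\sigma^k_\alpha$ (the $\Phi^i$ in the statement is a typo for $Y^i$). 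Where you genuinely go beyond the paper is the sufficiency direction, which the paper omits entirely (here and in Theorem~\ref{theorem_determining}, whose proof explicitly treats only one implication). Your group-law argument is sound in outline, but two points deserve tightening. First, the identity $E_{T'\circ T}=E_{T'}\circ E_T$ does not follow from Proposition~\ref{Prop3}, which only says that both $P_{T'\circ T}(X,W)$ and $P_{T'}(P_T(X,W))$ solve the respective SDEs; equality of the \emph{coefficients} is the content of the composition law, i.e.\ Theorem~\ref{compMis} specialised to $h=0$ (proved in \cite{DMU}), or a direct computation from Definition~\ref{definition2}. Second, invoking uniqueness for an autonomous ODE on the infinite-dimensional space of coefficient pairs is avoidable: using the other factorisation $T_{a+h}=T_a\circ T_h$ gives $\partial_a E_{T_a}(\mu,\sigma)=DE_{T_a}\big|_{(\mu,\sigma)}\big[\mathcal{G}_V(\mu,\sigma)\big]=0$ directly once the determining equations say $\mathcal{G}_V(\mu,\sigma)=0$, so $E_{T_a}(\mu,\sigma)$ is constant and equal to $(\mu,\sigma)$ without any uniqueness appeal. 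With those two repairs your argument is complete and, on the sufficiency side, more than the paper itself records.
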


The determining equations obtained in Theorem \ref{determiningeqs} provide a powerful tool for the explicit computation of  the symmetries of an SDE. Indeed, they are definitely easier to solve  than the equations introduced in Proposition \ref{Prop4} for finite symmetries.

\section{An additional random transformation: a probability measure change \label{section_2}}

\subsection{Extended stochastic transformations}

In this section we  introduce an additional transformation of an SDE and of its solution process given by the change of measure through Girsanov theorem. We propose here only a short introduction of the probabilistic theory of absolutely continuous predictable change of measure of Brownian motion. The interested reader is referred, e.g., to \cite{RoWi2000} Section 38 and to \cite{Oksendal} Section 8.6 for an extended treatment of the subject.\\

Let $\left(\Omega,\mathcal{F}, (\mathcal{F}_t), \mathbb{P}\right)$ be a filtered probability space where an $ (\mathcal{F}_t)$-Brownian motion $W_t$ is defined. We fix a finite time horizon $I=[0,\mathcal{T}]$ with $\mathcal{T}>0$ and we work  with the filtration $\mathcal{F}_{\mathcal{T}}=\{\mathcal{F}_t, 0 < t< \mathcal{T}\}$.\\
Given a process $\left(\theta_s\right)_{s\in[0,\mathcal{T}]} \in M^2_{loc}[0,\mathcal{T}] $  , we define the process  $\left(Z_t\right)_{t\in[0,\mathcal{T}]}$ by setting
\begin{equation}\label{supermartingale}
Z_t=Z_t(\theta):=\exp\left\{\int_{0}^{t}\theta_sdW_s -\frac{1}{2}\int_{0}^{t}\theta_s^2ds \right\}.
\end{equation}
An application of It\^o formula gives
\begin{equation*}
dZ_t=Z_t\theta_tdW_t,
\end{equation*}
which says that $Z$ is a local martingale.  Since $Z_t$ is strictly positive, we have that $Z_t$ is always a supermartingale. The process $Z_t$ is called \emph{exponential supermartingale} and can be used as a Radon-Nikodym derivative of a new probability measure $\mathbb{Q}$. This means that we can define on $\Omega$ the probability law $\mathbb{Q}_\mathcal{T}(d\omega):=Z_\mathcal{T}(\omega)\mathbb{P}_\mathcal{T}(d\omega)$, i.e.
 $\mathbb{Q}_\mathcal{T}(A):=\int_A{Z_\mathcal{T}(\omega)d\mathbb{P}_\mathcal{T}(\omega)}, \forall A\in \mathcal{F}$.
We recall the fundamental result allowing the change of a probability measure into an equivalent one.
\begin{theorem}[Girsanov's theorem]
Let $\left(Z_t\right)_{t\in[0,\mathcal{T}]}$ be the exponential supermartingale defined  in \eqref{supermartingale}. If $\left(Z_t\right)_{t\in[0,\mathcal{T}]}$ is a $\mathbb{P}$-martingale, then the process  $\left(W'_t\right)_{t \in [0,\mathcal{T}]}$ given by
\begin{equation*}
	W'_t=W_t-\int_{0}^{t} \theta_sds
\end{equation*}
is an $ (\mathcal{F})_t$-Brownian motion with respect to the probability measure  $\mathbb{Q}$, where
\[ \frac{\md\mathbb{Q}_\mathcal{T}}{\md\mathbb{P}_\mathcal{T}}:=\left.\frac{\md\mathbb{Q}}{\md\mathbb{P}}\right|_{\mathcal F_{\mathcal T}}=Z_\mathcal{T}.\]
\end{theorem}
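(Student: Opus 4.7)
The plan is to invoke L\'evy's characterization of Brownian motion: to show that $W'$ is a $\mathbb{Q}$-Brownian motion on $[0,\mathcal{T}]$ it suffices to verify that $W'_0=0$, that $W'$ has continuous paths, that $W'$ is a $\mathbb{Q}$-local martingale, and that its quadratic variation satisfies $\langle W'\rangle_t=t$. The first two properties are immediate: $W'_0=W_0=0$ and $W'$ differs from $W$ by the absolutely continuous process $t\mapsto \int_0^t\theta_s\,ds$. Also, the hypothesis that $Z$ is a $\mathbb{P}$-martingale together with $Z_0=1$ gives $\mathbb{E}_{\mathbb{P}}[Z_\mathcal{T}]=1$, so $\mathbb{Q}_\mathcal{T}$ is genuinely a probability measure.

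The bridge between the two measures is the Bayes-type translation lemma: for an adapted integrable process $M$, one has that $M$ is a $\mathbb{Q}$-martingale on $[0,\mathcal{T}]$ if and only if $MZ$ is a $\mathbb{P}$-martingale on $[0,\mathcal{T}]$. This follows from the identity
\[
\mathbb{E}_{\mathbb{Q}}[Y\mid \mathcal{F}_s]=\frac{1}{Z_s}\,\mathbb{E}_{\mathbb{P}}[Y Z_\mathcal{T}\mid \mathcal{F}_s],
\]
proved by testing against indicators $\mathbf{1}_A$ with $A\in\mathcal F_s$ and using the $\mathbb{P}$-martingale property of $Z$. I would then reduce the proof of the two remaining properties of $W'$ to showing that certain explicit products are $\mathbb{P}$-local martingales.

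The computational core is two applications of It\^o's product formula. Using $dZ_t=Z_t\theta_t\,dW_t$, $dW'_t=dW_t-\theta_t\,dt$ and $d\langle W',Z\rangle_t=Z_t\theta_t\,dt$, the finite-variation terms cancel and one obtains
\[
d(W'_t Z_t)=\bigl(Z_t+W'_t Z_t\theta_t\bigr)\,dW_t,
\]
so $W'Z$ is a $\mathbb{P}$-local martingale; by the translation lemma $W'$ is a $\mathbb{Q}$-local martingale. Analogously, applying It\^o to $\bigl((W'_t)^2-t\bigr)Z_t$ and noting that $d\langle W'\rangle_t=dt$ (since $W'$ and $W$ have the same quadratic variation), the $dt$-drift again cancels and leaves a pure $dW_t$-integrand, so $\bigl((W')^2-t\bigr)Z$ is a $\mathbb{P}$-local martingale. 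Hence $(W')^2-t$ is a $\mathbb{Q}$-local martingale, which by uniqueness of the continuous compensator of $(W')^2$ identifies $\langle W'\rangle_t=t$.

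The main technical obstacle is that the translation lemma is cleanest for true martingales, whereas the products above are only $\mathbb{P}$-local martingales. The standard remedy is a localization argument: choose a sequence of stopping times $\tau_n\uparrow\mathcal{T}$ reducing both $W'Z$ and $((W')^2-t)Z$ to bona fide $\mathbb{P}$-martingales, apply Bayes on each stopped interval to obtain the corresponding $\mathbb{Q}$-martingale property of $(W')^{\tau_n}$ and $((W')^2-t)^{\tau_n}$, and then pass to the limit. Combined with continuity and $W'_0=0$, L\'evy's characterization theorem then yields that $W'$ is a $\mathbb{Q}$-Brownian motion with respect to $(\mathcal{F}_t)$ on $[0,\mathcal{T}]$.
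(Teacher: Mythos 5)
Your proposal is correct: the Bayes translation lemma between $\mathbb{P}$- and $\mathbb{Q}$-(local) martingales, the two It\^o-product computations showing that $W'Z$ and $\left((W')^2-t\right)Z$ are $\mathbb{P}$-local martingales, the localization step, and the appeal to L\'evy's characterization together constitute the classical proof of Girsanov's theorem. The paper gives no proof of its own here --- it only cites Rogers and Williams, Theorem 38.5 --- and your argument is essentially the standard one found in that reference, so there is nothing substantive to compare.
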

\begin{proof}
The proof can be found for example in \cite{RoWi2000}, Theorem 38.5.
\end{proof}

In order to better explain the probabilistic meaning of the previous theorem, we recall the following notion.
\begin{definition}\label{equiv}
Let $\mathbb{P}$ and $\mathbb{Q}$ two probability measures on a measurable space  $\left(\Omega,\mathcal{F}\right)$.  $\mathbb{Q}$ is called \emph{absolutely continuous} with respect to $\mathbb{P}$, and we write $\mathbb{Q} \ll \mathbb{P}$, if
\begin{equation*}
	\mathbb{P}\left(A\right)=0  \Rightarrow \mathbb{Q}\left(A\right)=0 \text{ for all } A\in \mathcal{F}.
\end{equation*}
The measures $\mathbb{P}$ and $\mathbb{Q}$ are \emph{equivalent} if $\mathbb{Q} \ll \mathbb{P}$ and $\mathbb{P} \ll \mathbb{Q}$, that is if  they have the same set of null events.
\end{definition}
Since   the Radon-Nikodym derivative   in Girsanov theorem is strictly positive, i.e. $\frac{d\mathbb{Q}_\mathcal{T}}{d\mathbb{P}_\mathcal{T}}=Z_\mathcal{T}>0$, one can prove that the measure  $\mathbb{Q}_\mathcal{T}$ is actually equivalent to  $\mathbb{P}_\mathcal{T}$.\\

In order to be able to apply Girsanov theorem we need  $Z_t$ to be a  martingale. Establishing when the supermartingale $Z_t$ is a (global) martingale is in general not an easy task. Instead of using the well known Novikov condition (see \cite{ReYo1999}, Chapter VIII, Proposition 1.14), we introduce the following definition and lemma  which  allow us to achieve the same goal.

\begin{definition}
A smooth SDE $(\mu,\sigma)$ is called \emph{non explosive} if any solution $(X,W)$ to $(\mu,\sigma)$ is defined for all times $t\geq 0$.
A smooth vector field $h$ is called \emph{non explosive} for the non explosive SDE $(\mu,\sigma)$ if the SDE $(\mu+\sigma \cdot h,\sigma)$ is a non explosive SDE.
A positive smooth function $\eta$ is called \emph{a non explosive time change} for the non explosive SDE $(\mu,\sigma)$ if the SDE $\left(\frac{\mu}{\eta},\frac{\sigma}{\sqrt{\eta}} \right)$ is non explosive.
\end{definition}

\begin{lemma}\label{lemma_change}
Suppose that the equation $(\mu,\sigma)$ is a non explosive SDE, let $h:M\rightarrow \mathbb{R}^n$ be a smooth non explosive vector field, and suppose that $(X,W)$ is a solution to the SDE $(\mu,\sigma)$. Then the exponential supermartingale $Z_t$ associated with $\theta_t=h(X_t)$ is a (global) martingale.
\end{lemma}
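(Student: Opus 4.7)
The plan is to establish $\mathbb{E}[Z_t] = 1$ for every $t \in [0,\mathcal{T}]$, which combined with the fact that $Z$ is a nonnegative supermartingale with $Z_0=1$ immediately upgrades it to a true martingale. I would implement this by localizing along the exit times $\tau_n := \inf\{t\geq 0 : |X_t| \geq n\}$ of $X$ from the balls of radius $n$. Since $(\mu,\sigma)$ is non-explosive we have $\tau_n \to +\infty$ $\mathbb{P}$-a.s. On $[0,\tau_n]$ the integrand $\theta_s = h(X_s)$ is bounded by $c_n := \sup_{|x|\le n}|h(x)|$, so Novikov's condition is trivially satisfied for the truncated integrand $\theta_s\mathbf{1}_{\{s\leq \tau_n\}}$; consequently the stopped exponential $Z^{(n)}_t := Z_{t\wedge \tau_n}$ is a genuine $\mathbb{P}$-martingale with $\mathbb{E}[Z^{(n)}_t] = 1$.

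Using $Z^{(n)}_\mathcal{T}$ as a Radon--Nikodym density I would define equivalent probability measures $\mathbb{Q}^{(n)}$ on $\mathcal{F}_\mathcal{T}$. Applying Girsanov's theorem to the bounded truncated drift, $W'_t := W_t - \int_0^{t\wedge \tau_n} h(X_s)\,ds$ is a $\mathbb{Q}^{(n)}$-Brownian motion, and substituting $dW = dW' + h(X)\mathbf{1}_{\{\cdot \leq \tau_n\}}\,dt$ into the SDE $(\mu,\sigma)$ shows that on $[0,\tau_n]$ the pair $(X,W')$ is a weak $\mathbb{Q}^{(n)}$-solution of the transformed SDE $(\mu + \sigma\cdot h,\sigma)$. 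Since the latter is non-explosive by hypothesis, I would argue that
\begin{equation*}
\mathbb{Q}^{(n)}(\tau_n \leq t) \;\longrightarrow\; 0 \qquad (n\to\infty),
\end{equation*}
because the $\mathbb{Q}^{(n)}$-law of $X_{\cdot \wedge \tau_n}$ coincides with the stopped law of a weak solution of $(\mu+\sigma\cdot h,\sigma)$, which cannot leave every compact set in finite time.

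To conclude I would decompose the martingale identity
\begin{equation*}
1 \;=\; \mathbb{E}[Z^{(n)}_t] \;=\; \mathbb{E}\bigl[Z_t\mathbf{1}_{\{\tau_n > t\}}\bigr] + \mathbb{E}\bigl[Z_{\tau_n}\mathbf{1}_{\{\tau_n \leq t\}}\bigr].
\end{equation*}
The second summand equals $\mathbb{Q}^{(n)}(\tau_n \leq t)$ by definition of $\mathbb{Q}^{(n)}$ and vanishes in the limit by the previous step, while the first summand converges to $\mathbb{E}[Z_t]$ by monotone convergence, since $\mathbf{1}_{\{\tau_n>t\}}\uparrow 1$ $\mathbb{P}$-a.s. by non-explosion of $(\mu,\sigma)$. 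Letting $n\to\infty$ therefore yields $\mathbb{E}[Z_t]=1$, as required.

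The delicate step is the middle one: transferring the qualitative non-explosion of $(\mu+\sigma\cdot h,\sigma)$ into the quantitative estimate $\mathbb{Q}^{(n)}(\tau_n\leq t)\to 0$. This requires identifying the $\mathbb{Q}^{(n)}$-law of $X_{\cdot\wedge\tau_n}$ with the stopped law of a genuine weak solution of the transformed SDE starting from the same initial datum, which in turn relies on weak uniqueness for that SDE---a property that should follow from smoothness of the coefficients combined with the non-explosion hypothesis, but needs to be spelled out carefully.
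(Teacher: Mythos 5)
Your argument is correct in outline, but it is worth knowing that the paper does not actually prove this lemma: it simply cites Theorem 9.1.4 of Elworthy--Le Jan--Li. What you have written is, in essence, the standard proof of that cited result (localize, apply Novikov/Girsanov to the truncated drift, use non-explosion of the transformed equation to kill the boundary term, and upgrade the constant-expectation supermartingale to a martingale), so your proposal supplies the self-contained argument that the paper outsources.

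Two technical points deserve attention. First, since $M$ is only an open subset of $\mathbb{R}^n$ and $h$ is merely smooth on $M$, the quantity $\sup_{|x|\le n}|h(x)|$ need not be finite; the stopping times should be exit times from a compact exhaustion $K_n\uparrow M$, with ``non-explosion'' read as $X$ never reaching $\partial M$ nor infinity. Second, the step you yourself flag as delicate --- identifying the $\mathbb{Q}^{(n)}$-law of $X_{\cdot\wedge\tau_n}$ with the stopped law of a single, fixed, globally defined weak solution of $(\mu+\sigma\cdot h,\sigma)$ --- does require localized uniqueness in law, but in the present setting this is automatic: smooth coefficients are locally Lipschitz, so localized pathwise uniqueness holds and Yamada--Watanabe yields uniqueness in law up to exit times. (If one wants to avoid uniqueness altogether, the alternative is to observe that optional stopping gives the consistency $\mathbb{Q}^{(m)}\vert_{\mathcal{F}_{t\wedge\tau_n}}=\mathbb{Q}^{(n)}\vert_{\mathcal{F}_{t\wedge\tau_n}}$ for $m\ge n$, extend the family to a single measure $\mathbb{Q}$ under which $X$ solves the transformed SDE up to $\lim_n\tau_n$, and invoke non-explosion to conclude that this limit is a.s.\ infinite, whence $\mathbb{Q}^{(n)}(\tau_n\le t)=\mathbb{Q}(\tau_n\le t)\to 0$.) With either repair of the middle step, your decomposition $1=\mathbb{E}[Z_t\mathbf{1}_{\{\tau_n>t\}}]+\mathbb{Q}^{(n)}(\tau_n\le t)$ and the monotone convergence argument go through and give $\mathbb{E}[Z_t]=1$, which indeed forces the nonnegative supermartingale $Z$ to be a true martingale.
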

\begin{proof}
The proof can be found in \cite{ElJaLi2010}, Theorem 9.1.4.
\end{proof}

\begin{remark}
It is simple to see that, if $(\mu,\sigma)$ is a non explosive SDE and $T=(\Phi,B,1)$ is a stochastic transformation, then also $P_T(\mu,\sigma)$ is non explosive.
\end{remark}

\begin{theorem}\label{Girsanovderivative}
Let $(X,W)$ be a solution to the non explosive SDE $(\mu,\sigma)$ on the probability space $(\Omega,\mathcal{F},\mathbb{P})$ and let $h$ be a smooth non explosive vector field for $(\mu,\sigma)$.  Then $(X,W')$ is a solution to the SDE $(\mu',\sigma')=(\mu+\sigma \cdot h,\sigma)$ on the probability space $(\Omega,\mathcal{F},\mathbb{Q})$ where
\begin{eqnarray*}
W'_t&=&-\int_0^t{h(X_s)ds}+W_t\\
\frac{\md \mathbb{Q}_\mathcal{T}}{\md \mathbb{P}_\mathcal{T}}&=&\exp{\int_0^\mathcal{T}{h_{\alpha}(X_s)dW^{\alpha}_s}-\frac{1}{2}\int_0^\mathcal{T}{\sum_{\alpha=1}^m(h_{\alpha}(X_s))^2ds}}.
\end{eqnarray*}
\end{theorem}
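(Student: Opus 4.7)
The plan is to reduce this to the vector-valued version of Girsanov's theorem recalled just above, using Lemma \ref{lemma_change} to supply the martingale property of the relevant exponential.

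First I would set $\theta_s := h(X_s)$, viewed as an $m$-dimensional predictable process, and consider the associated exponential process
\begin{equation*}
Z_t = \exp\left\{\int_0^t h_\alpha(X_s)\,dW^\alpha_s - \frac{1}{2}\int_0^t \sum_{\alpha=1}^m (h_\alpha(X_s))^2\,ds\right\}.
\end{equation*}
Since $(\mu,\sigma)$ is non explosive and $h$ is a non explosive vector field for $(\mu,\sigma)$, Lemma \ref{lemma_change} applies and guarantees that $Z_t$ is a genuine $\mathbb{P}$-martingale on $[0,\mathcal{T}]$. In particular $Z_\mathcal{T}$ has $\mathbb{P}$-mean one and defines an equivalent probability $\mathbb{Q}_\mathcal{T}$ on $\mathcal{F}_\mathcal{T}$ through $\md\mathbb{Q}_\mathcal{T}/\md\mathbb{P}_\mathcal{T} = Z_\mathcal{T}$, which is exactly the Radon--Nikodym derivative in the statement.

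Next I would invoke Girsanov's theorem (applied componentwise to the $m$-dimensional Brownian motion $W$) to conclude that the process
\begin{equation*}
W'_t = W_t - \int_0^t h(X_s)\,ds
\end{equation*}
is an $(\mathcal{F}_t)$-Brownian motion under $\mathbb{Q}$. Substituting $dW_t = dW'_t + h(X_t)\,dt$ into the SDE satisfied by $(X,W)$ under $\mathbb{P}$ gives, $\mathbb{P}$-almost surely and therefore also $\mathbb{Q}$-almost surely (by equivalence of the two measures, as recalled in Definition \ref{equiv}),
\begin{equation*}
dX_t = \mu(X_t)\,dt + \sigma(X_t)\cdot\bigl(dW'_t + h(X_t)\,dt\bigr) = \bigl(\mu(X_t)+\sigma(X_t)\cdot h(X_t)\bigr)\,dt + \sigma(X_t)\cdot dW'_t,
\end{equation*}
so $(X,W')$ is a weak solution to the SDE $(\mu+\sigma\cdot h,\sigma)$ on $(\Omega,\mathcal{F},\mathbb{Q})$.

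The only non-routine step is the martingale property of $Z_t$: the usual Novikov criterion is not available here because no integrability assumption is imposed on $h(X_s)$. This is precisely the point where the non-explosiveness hypothesis, packaged in Lemma \ref{lemma_change}, does the heavy lifting; everything else is a direct application of the stated form of Girsanov's theorem and an algebraic rewriting of the SDE in terms of the new driving Brownian motion $W'$.
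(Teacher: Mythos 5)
Your proposal is correct and follows exactly the route the paper intends: the paper's own proof is the one-line remark that the result is ``a simple application of Girsanov theorem and Lemma \ref{lemma_change}'', and you have simply spelled out those two steps (martingale property of $Z_t$ from the non-explosiveness hypothesis via Lemma \ref{lemma_change}, then Girsanov plus substitution of $dW_t = dW'_t + h(X_t)\,dt$ into the SDE). No gaps; nothing further is needed.
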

\begin{proof}
The proof is a simple application of Girsanov theorem and Lemma \ref{lemma_change}.
\end{proof}

We define a new (finite) stochastic transformation including, besides the three transformations described in the previous section, also a change of the underlying probability measure.

\begin{definition}
	Let $\Phi : M \to M'$ be a diffeomorphism and let $B : M \to ~ SO\left(m\right)$, $\eta : M \to \mathbb{R}_+$ and $h : M \to \mathbb{R}^m$ be smooth functions. We call $T:=\left(\Phi,B,\eta,h\right)$ a \emph{ (weak finite) extended stochastic transformation}. 
\end{definition}

Since we cannot apply the previous transformation to any SDE, we give the following definition.

\begin{definition}
Let  $T=\left(\Phi,B,\eta,h\right)$ be an extended stochastic transformation. If the pair $ \left(X,W\right) $ is a continuous stochastic process where $X$ takes values in  $M$ and $W$ is an $m$-dimensional Brownian motion in the space $\left(\Omega,\mathcal{F},\mathbb{P}\right)$ and  $ \left(X,W\right) $ is a solution to the non explosive SDE $(\mu,\sigma)$ for which $h$ and $\eta$ are non explosive, we can define the process $P_T\left(X,W\right)=\left(P_T\left(X\right),P_T\left(W\right)\right)$, where $P_T\left(X\right)$ takes values in $M'$ and $P_T\left(W\right)$ is a Brownian motion into the space $\left(\Omega,\mathcal{F},\mathbb{Q}\right)$. The process components are given by
\begin{equation*}
\begin{split}
P_T\left(X\right)&= \Phi\left(H_\eta\left(X\right)\right)\\
dW'_t&=\sqrt{\eta\left(X_t\right)}B\left(X_t\right)\left(dW_t-h\left(X_t\right)dt\right)\\
	P_T\left(W\right)&= H_\eta\left(W'\right)\\
\frac{\md \mathbb{Q}_{\mathcal{T}}}{\md \mathbb{P}_{\mathcal{T}}}&=\int_0^\mathcal{T}{h_{\alpha}(X_s)dW^{\alpha}_s}-\frac{1}{2}\int_0^\mathcal{T}{\sum_{\alpha=1}^m(h_{\alpha}(X_s))^2ds}.
\end{split}
\end{equation*}
We call $P_T\left(X,W\right)$ the \emph{transformed process} of $ \left(X,W\right) $ with respect to $ T $ and we call the function $ P_T $ the \emph{process transformation} associated with $ T $.
\end{definition}

The previous definition is necessary if we are interested in defining the transformation $P_T$ on the set of solutions $(X,W)$ to the SDE $(\mu,\sigma)$. If we focus only on the SDE identified with the pair of smooth functions $(\mu, \sigma)$ and on the action $E_T$ of the stochastic transformation $T$, the previous definition is no more necessary and we can define the transformed SDE $E_T(\mu,\sigma)$ without making any request on the non-explosiveness of the process $(X,W)$.

\begin{definition}\label{sdetransformed}
Let $T=\left(\Phi,B,\eta,h\right)$ be an extended stochastic transformation. If $ \left(\mu,\sigma\right) $ is an  SDE on $ M $ then we define  $E_T\left(\mu,\sigma\right):=\left(E_T\left(\mu\right),E_T\left(\sigma\right)\right)$ the SDE on $M'$ defined as
	\begin{equation*}
	\begin{split}
	E_T\left(\mu\right)&=\left(\frac{1}{\eta}\left[L\left(\Phi\right)+\nabla \Phi \cdot \sigma \cdot h\right]\right)\circ \Phi^{-1}\\
	E_T\left(\sigma\right)&=\left(\frac{1}{\sqrt{\eta}}\nabla \Phi \cdot \sigma \cdot B^{-1}\right) \circ \Phi^{-1}.
	\end{split}
	\end{equation*}
	We call  $ E_T\left(\mu,\sigma\right) $ the \emph{transformed SDE} of $ \left(\mu,\sigma\right) $ with respect to $ T$ and we call the map $E_T$ the \emph{SDE transformation} associated with $T$.
\end{definition}

We stress that a specific choice on the order according to which the transformations are applied was performed.  First the change of measure and then the rotation together with the random time change. While the diffusion coefficient does not change, the form of the transformed drift strongly depends on this choice.

\begin{theorem}\label{theorempt}
Let $T=\left(\Phi,B,\eta,h\right)$ be an extended stochastic transformation and let $ \left(X,W\right) $ be a solution to the non explosive SDE $\left(\mu,\sigma\right)$ such that $E_T(\mu,\sigma)$ is non explosive. Then $P_T\left(X,W\right)$ is solution to the SDE $E_T\left(\mu,\sigma\right).$
\end{theorem}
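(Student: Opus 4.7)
The plan is to apply the four constituent transformations in sequence, reducing the theorem to the three propositions (Proposition \ref{Prop1}, \ref{Prop2}, and the rotation proposition) together with Theorem \ref{Girsanovderivative}, and then to check that the composition of the induced changes of drift and diffusion agrees with the formulas of Definition \ref{sdetransformed}.

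First, I would apply the Girsanov step. Since $(\mu,\sigma)$ is non-explosive and $h$ is a non-explosive vector field for it, Theorem \ref{Girsanovderivative} applies to $(X,W)$: defining $\tilde W_t:=W_t-\int_0^t h(X_s)\,ds$, the pair $(X,\tilde W)$ is a weak solution, on the new probability space $(\Omega,\mathcal F,\mathbb{Q})$ with the Radon--Nikodym density prescribed in the definition of $P_T$, to the SDE $(\mu+\sigma\cdot h,\sigma)$, which is again non-explosive by the definition of non-explosive vector field. Second, I would apply the rotation: by the rotation proposition with $B$, the process $\hat W$ defined by $d\hat W_t=B(X_t)\,d\tilde W_t$ is a $\mathbb{Q}$-Brownian motion and $(X,\hat W)$ solves $(\mu+\sigma\cdot h,\sigma\cdot B^{-1})$. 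Third, I would perform the random time change with $\eta$: by Proposition \ref{Prop2}, setting $dW'_t=\sqrt{\eta(X_t)}\,d\hat W_t$ (so that $W'$ coincides with the process appearing in the definition of $P_T$), the pair $(H_\eta(X),H_\eta(W'))$ solves the SDE
\[
\Bigl(\tfrac{1}{\eta}(\mu+\sigma\cdot h),\;\tfrac{1}{\sqrt{\eta}}\sigma\cdot B^{-1}\Bigr).
\]
Non-explosiveness of this intermediate SDE is needed so that the time change is defined on the whole interval $[0,\mathcal T]$; it follows from the hypothesis that $E_T(\mu,\sigma)$ is non-explosive together with the trivial observation that the spatial diffeomorphism $\Phi$ in the last step preserves non-explosiveness.

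Fourth, I would apply the spatial diffeomorphism $\Phi$ via Proposition \ref{Prop1} (It\^o's formula) to the SDE obtained in the previous step. Writing $L^{(h,\eta,B)}$ for its infinitesimal generator, and using $BB^T=I$, one finds
\[
L^{(h,\eta,B)}=\tfrac{1}{2\eta}(\sigma B^{-1})(\sigma B^{-1})^T{}^{ij}\partial_i\partial_j+\tfrac{1}{\eta}(\mu+\sigma\cdot h)^i\partial_i=\tfrac{1}{\eta}\bigl(L+\sigma^i_\alpha h^\alpha\partial_i\bigr),
\]
so $L^{(h,\eta,B)}(\Phi)=\tfrac{1}{\eta}\bigl(L(\Phi)+\nabla\Phi\cdot\sigma\cdot h\bigr)$. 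Proposition \ref{Prop1} then gives the transformed drift $\bigl(\tfrac{1}{\eta}(L(\Phi)+\nabla\Phi\cdot\sigma\cdot h)\bigr)\circ\Phi^{-1}$ and diffusion $\bigl(\tfrac{1}{\sqrt\eta}\nabla\Phi\cdot\sigma\cdot B^{-1}\bigr)\circ\Phi^{-1}$, which are exactly $E_T(\mu)$ and $E_T(\sigma)$.

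The main difficulty is purely bookkeeping: I must apply the transformations in the correct order (Girsanov first, then rotation and time change, then diffeomorphism), track which probability measure each Brownian motion is defined with respect to, and verify that the intermediate processes $\tilde W,\hat W,W'$ appearing in the proof glue together to give precisely the $P_T(W)$ of the definition. The only analytic point, rather than formal composition, is the applicability of Girsanov's theorem, which is granted by Lemma \ref{lemma_change} thanks to the non-explosiveness hypotheses built into the statement.
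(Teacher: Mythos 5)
Your proposal is correct and follows essentially the same route as the paper's proof: apply the Girsanov change of measure first, then the rotation and random time change, and finally the spatial diffeomorphism via It\^o's formula, composing the constituent propositions to recover the formulas of Definition \ref{sdetransformed}. Your version merely spells out a bit more explicitly the intermediate generator computation and the non-explosiveness bookkeeping that the paper leaves implicit.
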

\begin{proof}
	We show how the SDE $(\mu, \sigma)$ changes when we apply the transformations in the order specified above. For simplicity we omit the explicit dependence on the values of the process. The starting equation is
	\begin{equation*}
	dX_t=\mu dt + \sigma \cdot dW_t
	\end{equation*}
	We apply first the measure change obtaining
	\begin{equation*}
		dX'_t=\left(\mu+\sigma \cdot h\right) dt + \sigma \cdot dW'_t
	\end{equation*}
	and then the rotation of the Brownian motion and the random time change, obtaining
	\begin{equation*}
	\begin{split}
	dX''_t&=\frac{1}{\eta}\left(\mu+\sigma \cdot h\right) dt + \frac{1}{\sqrt{\eta}}\sigma \cdot B^{-1} \cdot dW''_t\\
	&=\frac{1}{\eta}\tilde{\mu}dt + \tilde{\sigma} \cdot dW''_t \\
		\end{split}
	\end{equation*}
	where we set $\tilde{\mu}=\mu+\sigma \cdot h$ and $\tilde{\sigma}=\frac {1}{\sqrt{\eta}}\sigma \cdot B^{-1}$. Finally, we apply the spatial transformation considering only the drift (since the equations involving $\sigma$ coincide with the analogous equations obtained considering non-extended stochastic transformation), and we get
	\begin{equation*}
	\begin{split}
	E_T\left(\mu\right)&=\left(\frac{1}{\eta}\left[\frac{1}{2}\left(\sigma \cdot \sigma^T\right)^{ij} \partial_i \partial_j \Phi +  \tilde{\mu}^i\partial_i \Phi\right]\right)\circ \Phi^{-1}\\
	&=\left(\frac{1}{\eta} \left[L\left(\Phi\right)+\nabla \Phi \cdot \sigma\cdot h\right] \right)\circ \Phi^{-1}.
	\end{split}
	\end{equation*}
	Exploiting Definition \ref{sdetransformed} and Theorem \ref{theorempt}, this concludes the proof.
	\end{proof}

In the following, in order to simplify notations, we refer to extended stochastic transformations just as stochastic transformations.

\subsection{Stochastic transformations group and associated infinitesimal transformations}\label{gH}

Let $G=SO(m)\times \mathbb{R}_+\ltimes\mathbb{R}^m $ be the group of rototranslations with a scaling factor whose elements $g=(B,\eta, h) $ can be identified with the matrices of the form
 \[ \left(  \begin{array}{cc}
    \frac {B^{-1}}{\sqrt \eta} & h \\
    0 & 1 \\
  \end{array}
\right)\]
 If we consider the trivial principal bundle $\pi:M \times G \to M$ with structure group $G$, we can define the following action of $G$ on $M\times G$
\begin{eqnarray*}
	R_{g_2}: &M\times G &\to M\times G\\
	&(x,g_1) &\to (x, g_1 \cdot g_2)
	\end{eqnarray*}
which leaves $M$ invariant, where $g_1 \cdot g_2$ denotes  the standard product $g_1\cdot g_2=(B_1,\eta_1,h_1)\cdot (B_2,\eta_2,h_2)=(B_2 B_1, \eta_2\eta_1, \frac {B_1^{-1}}{\sqrt{\eta_1}} h_2+h_1)$.
If we consider another trivial principal bundle $\pi':  M'\times G \to M'$,  we say that a diffeomorphism $F: M\times G  \to M'\times G $ between the two  principal  bundles $M\times G $ and $ M'\times G $ is an \emph{isomorphism} if $F$ preserves the structures of principal bundles of both $M\times G $ and $ M'\times G $, i.e. there exists a diffeomorphism $\Phi: M \to M'$ such that
\begin{eqnarray*}
	F\circ \pi' &=&\pi \circ \Phi\\
	F\circ R_g &=& R_g\circ F
\end{eqnarray*}
for any $g\in G$.
It is easy to check that an isomorphism of the previous form is completely determined by its value on $(x, e)$, where $e$ is the unit element of $G$. Therefore, there is a natural identification between a stochastic transformation $T=(\Phi, B, \eta, h)$ and the isomorphism $F_T$ such that $F_T(x,e)=(\Phi(x), g)$ where $g=(B,\eta,h)$.
In particular we can exploit the natural composition of isomorphisms in order to define a composition of stochastic transformations in the following way: if $T_1=\left(\Phi_1,B_1,\eta_1,h_1\right)$ and $T_2=\left(\Phi_2,B_2,\eta_2,h_2\right)$ then
	\begin{equation*}
	T_2\circ T_1=\left(\Phi_2 \circ \Phi_1,\left(B_2 \circ \Phi_1\right)\cdot B_1,\left(\eta_2 \circ \Phi_1\right)\eta_1,\frac{1}{\sqrt{\eta_1}}B_1^{-1} \cdot \left(h_2 \circ \Phi_1\right)+h_1\right).
	\end{equation*}
Moreover we can consider the inverse transformation of  $T=\left(\Phi,B,\eta,h\right)$ given by
\begin{equation*}
T^{-1}=\left(\Phi^{-1},\left(B\circ \Phi^{-1}\right)^{-1},\left(\eta\circ \Phi^{-1}\right)^{-1}, -\sqrt{\eta}B \cdot h\circ \Phi^{-1}\right).
\end{equation*}
The following theorem, generalizing a result obtained in \cite{DMU} for stochastic transformations without the change of probability measure, shows that the  identification of  stochastic transformations with the isomorphisms of a suitable trivial principal bundle has a deep probabilistic counterpart in terms of SDE and process transformations.

\begin{theorem}\label{compMis}
	Let $T_1,T_2$ be two  stochastic transformations, let $(\mu,\sigma)$ be a non explosive SDE such that $E_{T_1}(\mu,\sigma)$ and $E_{T_2}(E_{T_1}(\mu,\sigma))$ are non explosive and let $(X,W)$ be a solution to the SDE $(\mu,\sigma)$ on the probability space  $\left(\Omega,\mathcal{F},\mathbb{P}\right)$.  Then on the probability space  $\left(\Omega,\mathcal{F},\mathbb{Q}\right)$ we have
	\begin{eqnarray*}
	P_{T_2}(P_{T_1}(X,W))&=&P_{T_2 \circ T_1}(X,W)\\
	E_{T_2}(E_{T_1}(\mu,\sigma))&=&E_{T_2 \circ T_1}(\mu,\sigma).
	\end{eqnarray*}
\end{theorem}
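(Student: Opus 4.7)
The plan is to establish both identities by direct computation, relying on the step-by-step decomposition already used in the proof of Theorem \ref{theorempt}. I would prove the SDE identity first, since the process identity will piggyback on it via careful bookkeeping of the Girsanov densities.

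For the SDE identity $E_{T_2}(E_{T_1}(\mu,\sigma)) = E_{T_2 \circ T_1}(\mu,\sigma)$, I would set $(\mu_1,\sigma_1) := E_{T_1}(\mu,\sigma)$ and use Definition \ref{sdetransformed} twice. The diffusion part is routine: $E_{T_2}(\sigma_1)$ equals $\frac{1}{\sqrt{\eta_2 \circ \Phi_1 \cdot \eta_1}}(\nabla\Phi_2 \circ \Phi_1)\cdot \nabla\Phi_1 \cdot \sigma \cdot B_1^{-1} \cdot (B_2 \circ \Phi_1)^{-1}$ composed with $(\Phi_2\circ\Phi_1)^{-1}$, which by the chain rule $\nabla(\Phi_2\circ\Phi_1) = (\nabla\Phi_2 \circ \Phi_1)\cdot \nabla \Phi_1$ matches $E_{T_2\circ T_1}(\sigma)$ once one reads off $B = (B_2\circ\Phi_1)\cdot B_1$ and $\eta = (\eta_2\circ\Phi_1)\eta_1$ from the composition law. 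The drift part is the delicate one: the formula for $E_T(\mu)$ contains the extra term $\nabla\Phi\cdot\sigma\cdot h$. Expanding $E_{T_2}(\mu_1)$ one gets, after $\Phi_1^{-1}$, the expression $\frac{1}{\eta_2\circ\Phi_1}[L_1(\Phi_2)\circ\Phi_1 + (\nabla\Phi_2\circ\Phi_1)\cdot\sigma_1\cdot h_2]$, where $L_1$ is the generator of $(\mu_1,\sigma_1)$. Writing $L_1(\Phi_2) = \frac{1}{\eta_1}\{L(\Phi_2\circ\Phi_1) + \nabla(\Phi_2\circ\Phi_1)\cdot\sigma\cdot h_1\}\circ\Phi_1^{-1}$ and substituting $\sigma_1 = \frac{1}{\sqrt{\eta_1}}(\nabla\Phi_1\cdot\sigma\cdot B_1^{-1})\circ\Phi_1^{-1}$ in the extra term produces exactly the combination $\nabla(\Phi_2\circ\Phi_1)\cdot\sigma\cdot h$ with $h = \frac{1}{\sqrt{\eta_1}}B_1^{-1}(h_2\circ\Phi_1) + h_1$. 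This is precisely why the semidirect product structure of $G$, and in particular the matrix representation that multiplies $h_2$ by $\frac{B_1^{-1}}{\sqrt{\eta_1}}$ before translation, was chosen: the group law exactly mirrors the algebraic identity the drift composition forces on us.

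For the process identity, I would introduce $(X^{(1)},W^{(1)}) := P_{T_1}(X,W)$ on $(\Omega,\mathcal{F},\mathbb{Q}_1)$ with $\md\mathbb{Q}_1/\md\mathbb{P} = \mathcal{E}(\int h_1(X_s)^\alpha dW^\alpha_s)_\mathcal{T}$, and then $(X^{(12)},W^{(12)}) := P_{T_2}(X^{(1)},W^{(1)})$ on $(\Omega,\mathcal{F},\mathbb{Q}_2)$. The spatial and time-change components are immediate: $\Phi_2(H_{\eta_2\circ\Phi_1}(X^{(1)})) = (\Phi_2\circ\Phi_1)(H_{(\eta_2\circ\Phi_1)\eta_1}(X))$ by compatibility of time changes under a diffeomorphism, giving the correct spatial part of $P_{T_2\circ T_1}(X,W)$. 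The Brownian component is where the real work is: the rotated/time-changed $dW^{(12)}$ must be expressed back in terms of $dW$, and the two Girsanov drifts merge. Writing $dW^{(1)}_{t'} = \sqrt{\eta_1(X_t)}B_1(X_t)(dW_t - h_1(X_t)dt)$ with $t' = \int_0^t\eta_1(X_s)ds$, and then $dW^{(12)}_{t''} = \sqrt{\eta_2(X^{(1)}_{t'})}B_2(X^{(1)}_{t'})(dW^{(1)}_{t'} - h_2(X^{(1)}_{t'})dt')$, one substitutes and obtains, after relabeling by the combined time change, exactly the defining formula for $P_{T_2\circ T_1}(W)$ with combined rotation $(B_2\circ\Phi_1)\cdot B_1$, combined scaling $(\eta_2\circ\Phi_1)\eta_1$, and combined drift $\frac{1}{\sqrt{\eta_1}}B_1^{-1}(h_2\circ\Phi_1) + h_1$. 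Finally, the Radon--Nikodym derivative $\md\mathbb{Q}_2/\md\mathbb{P} = (\md\mathbb{Q}_2/\md\mathbb{Q}_1)(\md\mathbb{Q}_1/\md\mathbb{P})$ is computed using the rules for stochastic exponentials under time change and rotation; this reproduces the single-step Girsanov density with drift $h$ read off from $T_2\circ T_1$, completing the matching.

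The main obstacle is the bookkeeping for the Girsanov drift under the combined action of the rotation and the time change: verifying that the factor $\frac{1}{\sqrt{\eta_1}}B_1^{-1}$ in front of $h_2\circ\Phi_1$ is correct requires carefully computing $dt'$-integrals of $h_2(X^{(1)})\cdot dW^{(1)}$ in terms of $dt$-integrals of $dW$, using the quadratic variation identity $d\langle W^{(1)}\rangle_{t'} = I_m\, dt'$ together with the explicit expression of $W^{(1)}$ as a rotated and time-changed martingale. Once this is organized, the remaining checks reduce to matching the four slots of the stochastic transformation, one at a time, and invoking the composition law stated in Subsection \ref{gH}.
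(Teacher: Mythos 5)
Your overall strategy is the same as the paper's -- a direct component-by-component computation -- though you carry out the full bookkeeping where the paper delegates the $(\Phi,B,\eta)$ part to the earlier reference and only computes the composition of the Brownian-motion transformations. The problem is that at exactly the step you flag as ``the delicate one,'' your displayed intermediate formulas do not produce the combined Girsanov drift you claim. In the drift expansion, $E_{T_2}(\mu_1)$ pulled back by $\Phi_1$ is $\frac{1}{\eta_2\circ\Phi_1}\bigl[\frac{1}{\eta_1}\bigl(L(\Phi_2\circ\Phi_1)+\nabla(\Phi_2\circ\Phi_1)\cdot\sigma\cdot h_1\bigr)+\frac{1}{\sqrt{\eta_1}}\nabla(\Phi_2\circ\Phi_1)\cdot\sigma\cdot B_1^{-1}\cdot(h_2\circ\Phi_1)\bigr]$: the prefactor $\frac{1}{\eta_1}$ multiplies the $h_1$ term but not the $\sigma_1\cdot h_2$ term, so after factoring out $\frac{1}{(\eta_2\circ\Phi_1)\eta_1}$ to match the form of $E_{T_2\circ T_1}(\mu)$, the $h_2$ contribution acquires the coefficient $\eta_1\cdot\frac{1}{\sqrt{\eta_1}}=\sqrt{\eta_1}$, i.e.\ you get $\tilde h=h_1+\sqrt{\eta_1}\,B_1^{-1}(h_2\circ\Phi_1)$, not $h_1+\frac{1}{\sqrt{\eta_1}}B_1^{-1}(h_2\circ\Phi_1)$ as the composition law in Subsection \ref{gH} prescribes. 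The same happens in your process computation: you (correctly, given the definition of $P_{T_2}$ acting on the time-changed pair) write the second Girsanov shift as $h_2(X^{(1)}_{t'})\,dt'$ with $dt'=\eta_1\,dt$, and factoring $\sqrt{\eta_1}B_1$ out of $\eta_1 h_2$ again leaves $\sqrt{\eta_1}B_1^{-1}h_2$. So the claimed matching with $T_2\circ T_1$ is asserted, not derived, and your own formulas contradict it.

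Note that the paper's proof reaches the stated group law only because it writes the second shift as $dW'_t-h_2\,dt$ against the \emph{original} time differential (treating $W'$ as the pre-time-change intermediate object), whereas you use $dt'$. You cannot have both: either you adopt the paper's convention throughout (in which case the $dt'$ in your process step is wrong and must be replaced by $dt$), or you keep the $dt'$ convention, in which case the combined drift is $h_1+\sqrt{\eta_1}B_1^{-1}(h_2\circ\Phi_1)$ and the identity $E_{T_2}\circ E_{T_1}=E_{T_2\circ T_1}$ with the composition law as stated does not close. You have in fact surfaced a real tension between Definition \ref{sdetransformed} and the composition law, but a correct proof must resolve it explicitly -- either by fixing one convention and verifying that all four slots compose consistently under it, or by pointing out which formula needs the factor $\sqrt{\eta_1}$ in place of $\frac{1}{\sqrt{\eta_1}}$. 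As written, the proposal has a genuine gap at the one step that actually distinguishes this theorem from its predecessor without the measure change.
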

\begin{proof}
The proof of this result without the change of probability measure can be found in \cite{DMU}, hence  we have just to add the part related to the last component of the stochastic transformation.\\
Applying the two transformations to the Brownian motion, according to the fixed specified order, we obtain
	\begin{equation*}
	\begin{split}
		dW'_t&=\sqrt{\eta_1}B_1\cdot \left(dW_t-h_1dt\right)\\
			dW''_t&=\sqrt{\eta_2}B_2 \cdot \left(dW'_t-h_2dt\right)\\
			&=\sqrt{\eta_2}B_2 \cdot\left(\sqrt{\eta_1}B_1\left(dW_t-h_1dt\right)-h_2dt\right)\\
			&=\sqrt{\eta_2 \eta_1}B_2 \cdot B_1\cdot\left(dW_t-\left(h_1+\frac{1}{\sqrt{\eta_1}}B_1^{-1}\cdot h_2\right)dt\right)\\
			&=\sqrt{\tilde{\eta}} \tilde{B} \cdot \left(dW_t-\tilde{h}dt\right),
			\end{split}
		\end{equation*}
		where
		\begin{equation*}
		\tilde{h}=\frac{1}{\sqrt{\eta_1}}B_1^{-1}\cdot h_2+h_1.
		\end{equation*}
		Noting that, after the first transformation $T_1$, the state variable is $\Phi \left(X\right)$, we obtain the statement.		
\end{proof}

Since the set of stochastic transformations is a group with respect the composition $\circ$, we can  consider the one parameter group $T_a=\left(\Phi_a,B_a,\eta_a,h_a\right)$ and the corresponding infinitesimal transofrmation  $V=\left(Y,C,\tau,H\right)$ obtained in the usual way
\begin{equation*}
\begin{split}
Y\left(x\right)&=\partial_a\left(\Phi_a\left(x\right)\right)|_{a=0}\\
C\left(x\right)&=\partial_a\left(B_a\left(x\right)\right)|_{a=0}\\
\tau\left(x\right)&=\partial_a\left(\eta_a\left(x\right)\right)|_{a=0}\\
H\left(x\right)&=\partial_a\left(h_a\left(x\right)\right)|_{a=0}.\\
\end{split}
\end{equation*}
On the other hand, given $V=\left(Y,C,\tau,H\right)$, where $Y$ is a vector field on $M$, $C : M\to \mathfrak{so}\left(m\right)$,  $\tau : M \to \R$  and $H : M \to \R^m$ are smooth functions, we can  reconstruct the one parameter transformation group $T_a$ exploiting the following relations

\begin{equation*}
\begin{split}
\partial_a\left(\Phi_a\left(x\right)\right)&=Y\left(\Phi_a\left(x\right)\right)\\
\partial_a\left(B_a\left(x\right)\right)&=C\left(\Phi_a\left(x\right)\right)\cdot B_a\left(x\right)\\
\partial_a\left(\eta_a\left(x\right)\right)&=\tau\left(\Phi_a\left(x\right)\right)\eta_a\left(x\right)\\
\end{split}
\end{equation*}	
with initial data $\Phi_0\left(x\right)=id_M\left(x\right)$, $B_0\left(x\right)=I_m$, $\eta_0\left(x\right)=1$. Moreover, using Theorem \ref{compMis} and the properties of the flow,  we obtain
\begin{equation*}
\begin{split}
h_{b+a}\left(x\right)&=\frac{1}{\sqrt{\eta_a \left(x\right)}}B_a^{-1}\left(x\right)\cdot h_b \left(\Phi_a\left(x\right)\right)+h_a\left(x\right)\\
\partial_b \left(h_{b+a}\left(x\right)\right)&=\frac{1}{\sqrt{\eta_a \left(x\right)}}B_a^{-1}\left(x\right) \cdot\partial_b \left( h_b \left(\Phi_a\left(x\right)\right)\right)\\
\partial_b \left(h_{b+a}\left(x\right)\right)|_{b=0}&=\partial_a \left(h_a\left(x\right)\right)=\frac{1}{\sqrt{\eta_a\left(x\right)}}B_a^{-1}\left(x\right)\cdot H \left(\Phi_a\left(x\right)\right)
\end{split}
\end{equation*}
subjected to the initial condition $h_0\left(x\right)=0$. \\

The notions of composition and one parameter group of  stochastic transformations allow us to consider  the action of a finite stochastic transformation on an infinitesimal one, as well as the Lie brackets between two infinitesimal stochastic transformations. Let $T=(\Phi,B,\eta,h)$ be a finite stochastic transformation and let $V=(Y,C,\tau,H)$ be an infinitesimal  stochastic transformation with associated one parameter group given by $T_a$. Then there exists an infinitesimal  stochastic transformation $T_*(V)$, called \emph{push forward of $V$ through the transformation $T$}, generating the one parameter group given by $T\circ T_a \circ T^{-1}$ and having the following form
\begin{multline}
T_*(V)=\left(\Phi_*(Y),(B\cdot C \cdot B^{-1}+Y(B) \cdot B^{-1})\circ \Phi^{-1},\left(\tau+\frac{Y(\eta)}{\eta}\right)\circ \Phi^{-1},\phantom{\frac{\tau}{2}}\right.\\
\left.\phantom{\frac{Y(\eta)}{\eta}}\left(-\sqrt{\eta}B\cdot\left(\frac{\tau}{2}+C \right)\cdot h+\sqrt{\eta} B\cdot H + \sqrt{\eta} B\cdot Y(h) \right)\circ \Phi^{-1}      \right),
\end{multline}
where $\Phi_*(Y)$ denotes the push forward of the vector field $Y$. Using the previous expression we can define the Lie brackets between two infinitesimal stochastic transformations $V_1=(Y_1,C_1,\tau_1,H_1)$ and $V_2=(Y_2,C_2,\tau_2,H_2)$ which is
\begin{multline}
[V_1,V_2]=\left([Y_1,Y_2], Y_1(C_2)-Y_2(C_1) -\{C_1,C_2\}, Y_1(\tau_2)-Y_2(\tau_1)  \right.\\
\left. Y_1(H_2)-Y_2(H_1)+\left(\frac{\tau_1}{2}+C_1 \right)\cdot H_2 - \left(\frac{\tau_2}{2}+C_2 \right)\cdot H_1\right).
\end{multline}

\subsection{Determining equations}\label{V}

In this section we provide the new \emph{determining equations} (including the probability measure change) for the infinitesimal symmetries of an SDE. \par
\begin{proposition}
An (extended) stochastic transformation $T=(\Phi,B,\eta,h)$ is a (finite) symmetry of the non explosive SDE $(\mu,\sigma)$ if and only if
\begin{eqnarray*}
\left(\frac{1}{\eta} \left[L\left(\Phi\right)+\nabla \Phi \cdot \sigma\cdot h\right] \right)\circ \Phi^{-1}&=&\mu\\
\left(\frac{1}{\sqrt{\eta}}\nabla\Phi \cdot \sigma \cdot B^{-1}\right)\circ\Phi^{-1}&=&\sigma.
\end{eqnarray*}
\end{proposition}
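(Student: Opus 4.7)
The strategy is to recognize that the displayed equations are, by Definition~\ref{sdetransformed}, precisely the coordinate expression of the condition $E_T(\mu,\sigma)=(\mu,\sigma)$. So the proposition is really asserting that $T$ is a symmetry of $(\mu,\sigma)$ if and only if $E_T$ fixes $(\mu,\sigma)$. The proof then reduces to running Theorem~\ref{theorempt} in both directions.

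For the \emph{if} implication, I assume the two displayed equations, which unfold as $E_T(\mu)=\mu$ and $E_T(\sigma)=\sigma$, i.e.\ $E_T(\mu,\sigma)=(\mu,\sigma)$. Given any weak solution $(X,W)$ of $(\mu,\sigma)$ on $(\Omega,\mathcal{F},\mathbb{P})$, the hypothesis that $(\mu,\sigma)$ is non explosive and $E_T(\mu,\sigma)=(\mu,\sigma)$ is also non explosive lets me invoke Theorem~\ref{theorempt}, which delivers $P_T(X,W)$ as a solution to $E_T(\mu,\sigma)=(\mu,\sigma)$ on the measure-changed space $(\Omega,\mathcal{F},\mathbb{Q})$. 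That is exactly the symmetry condition.

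For the \emph{only if} implication, suppose $T$ is a symmetry, so $P_T(X,W)$ is a weak solution of $(\mu,\sigma)$. On the other hand, Theorem~\ref{theorempt} identifies $P_T(X,W)$ as a weak solution of $E_T(\mu,\sigma)$. Thus the same continuous semimartingale $P_T(X)$ simultaneously solves two SDEs with smooth coefficients on $M'$. I would now invoke the uniqueness of the semimartingale decomposition, together with the observation that for every point $x_0\in M'$ one can produce a weak solution of $(\mu,\sigma)$ whose image under $P_T$ starts at $x_0$ (concretely, start $(X,W)$ at $\Phi^{-1}(x_0)$): comparing the finite-variation and quadratic-variation parts of $P_T(X)$ pointwise at $x_0$ forces the coefficients of $(\mu,\sigma)$ and $E_T(\mu,\sigma)$ to coincide at $x_0$, hence on all of $M'$. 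Unpacking $E_T$ via Definition~\ref{sdetransformed} yields exactly the two displayed identities.

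The only nontrivial step is the pointwise identification of coefficients in the \emph{only if} direction, and this is handled exactly as in the proof of Proposition~\ref{Prop4} in \cite{DMU}: the argument is identical except that the drift now carries the extra term $\nabla\Phi\cdot\sigma\cdot h$ coming from the Girsanov component, which was precisely built into $E_T(\mu)$ in Definition~\ref{sdetransformed}, so no new analytic ingredient is required.
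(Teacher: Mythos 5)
Your proposal is correct and follows essentially the same route as the paper: the authors likewise reduce the statement to $E_T(\mu,\sigma)=(\mu,\sigma)$, apply Theorem~\ref{theorempt} in place of Proposition~\ref{Prop3}, and defer the pointwise identification of coefficients to the proof of Proposition~\ref{Prop4} in \cite{DMU}, noting (as you do) that non-explosiveness need only be assumed for $(\mu,\sigma)$ itself since the transformed equation coincides with it.
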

\begin{proof}
The proof is a simple generalization of the proof of Proposition \ref{Prop4} (see \cite{DMU}) using Theorem \ref{theorempt} in replacement of Proposition \ref{Prop3}. It is important to note that since $E_T(\mu,\sigma)=(\mu,\sigma)$, in order to use Lemma  \ref{lemma_change}, we need only to require that $(\mu,\sigma)$ is non explosive.
\end{proof}

The characterization expressed by Proposition \ref{Prop4} for the coefficients $E_T\left(\mu\right)$ and $E_T\left(\sigma\right)$ under the symmetry hypothesis still holds because the new probability measure $\mathbb{Q}$ is equivalent to $\mathbb{P}$ (see Definition \ref{equiv}).

\begin{theorem}\label{theorem_determining}
	Let $V=\left(Y,C,\tau,H\right)$ be  an infinitesimal stochastic transformation. Then $V$ is an infinitesimal symmetry of the SDE $\left(\mu,\sigma\right)$ if and only if $V$ generates a one parameter group defined on $M$ and we have
	\begin{equation*}
	\begin{split}
	Y\left(\mu\right)-L\left(Y\right)-\sigma \cdot H+\tau \mu&=0\\
	\left[Y,\sigma \right] +\frac{1}{2}\tau \sigma + \sigma \cdot C&=0.
	\end{split}
	\end{equation*}
\end{theorem}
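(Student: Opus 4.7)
The plan is to reduce to the finite symmetry characterization stated just before the theorem, differentiate it at $a=0$ along a one-parameter group $T_a = (\Phi_a, B_a, \eta_a, h_a)$ generated by $V = (Y, C, \tau, H)$, and conversely to show that the infinitesimal conditions force the finite conditions at every $a$ by a flow-on-SDE-space argument. The existence of the one-parameter group on $M$ is the standing hypothesis of the theorem; I would use the initial data $\Phi_0 = \mathrm{id}_M$, $B_0 = I_m$, $\eta_0 = 1$, $h_0 = 0$, together with $\partial_a \Phi_a|_{a=0} = Y$, $\partial_a B_a|_{a=0} = C$, $\partial_a \eta_a|_{a=0} = \tau$, $\partial_a h_a|_{a=0} = H$.

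For the forward direction, I would differentiate the two finite conditions at $a=0$. Starting from the diffusion equation $\tfrac{1}{\sqrt{\eta_a}} \nabla\Phi_a \cdot \sigma \cdot B_a^{-1} = \sigma \circ \Phi_a$, the ingredients are $\partial_a \eta_a^{-1/2}|_0 = -\tau/2$, $\partial_a(\nabla\Phi_a)|_0 = \nabla Y$, and $\partial_a(B_a^{-1})|_0 = -C$ (using $B_a \in SO(m)$ and $C \in \mathfrak{so}(m)$, so that $B_a^{-1} = B_a^T$ and $C^T = -C$), while the right-hand side produces $Y(\sigma)$; rewriting $\nabla Y \cdot \sigma - Y(\sigma) = -[Y,\sigma]$ in the bracket convention fixed in Theorem \ref{determiningeqs} yields $[Y,\sigma] + \tfrac{1}{2}\tau \sigma + \sigma \cdot C = 0$. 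For the drift equation $\tfrac{1}{\eta_a}[L(\Phi_a) + \nabla\Phi_a \cdot \sigma \cdot h_a] = \mu \circ \Phi_a$, I would use $\partial_a \eta_a^{-1}|_0 = -\tau$, $L(\Phi_0) = \mu$ componentwise (since $L(x^i) = \mu^i$), $\partial_a L(\Phi_a)|_0 = L(Y)$, and $\partial_a(\nabla\Phi_a \cdot \sigma \cdot h_a)|_0 = \sigma \cdot H$ (the latter because $h_0 = 0$); the right-hand side gives $Y(\mu)$, and rearranging delivers the first determining equation.

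For the converse, I would read the determining equations as saying that $(\mu, \sigma)$ is a fixed point of the infinitesimal action $\mathcal V$ of $V$ on SDE coefficients, whose formula at a generic $(\mu', \sigma')$ is the one just computed with $(\mu, \sigma)$ replaced by $(\mu', \sigma')$. Setting $G(a) := E_{T_a}(\mu, \sigma)$, the composition identity of Theorem \ref{compMis} gives $G(s+a) = E_{T_s}(G(a))$, and differentiating at $s=0$ produces the autonomous ODE $G'(a) = \mathcal V(G(a))$. Since $\mathcal V(\mu, \sigma) = 0$ precisely encodes the determining equations, the constant path $G(a) \equiv (\mu, \sigma)$ satisfies both the ODE and the initial condition $G(0) = (\mu, \sigma)$, so by uniqueness $E_{T_a}(\mu, \sigma) = (\mu, \sigma)$ for every $a$ and $T_a$ is a finite symmetry. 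The main obstacle is organising this propagation argument cleanly: one has to verify that the infinitesimal action $\mathcal V$ is indeed the correct right-hand side of the ODE at every point of SDE space, and that the non-explosiveness hypotheses needed to apply Theorem \ref{compMis} persist along the flow, so that the ODE really closes on $G(a)$. The forward computation, by contrast, is essentially a routine application of the chain rule together with the initial-data identities above.
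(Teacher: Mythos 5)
Your forward direction is essentially the paper's own proof: the authors likewise differentiate the finite characterization $\bigl(\tfrac{1}{\eta_a}[L(\Phi_a)+\nabla\Phi_a\cdot\sigma\cdot h_a]\bigr)\circ\Phi_a^{-1}=\mu$ at $a=0$, using exactly the initial data you list (in particular $h_0=0$, which is what reduces $\partial_a(\nabla\Phi_a\cdot\sigma\cdot h_a)|_{a=0}$ to $\sigma\cdot H$), and they dispose of the diffusion condition by noting it is unchanged from Theorem \ref{determiningeqs}; your direct differentiation of it is the same computation. Where you genuinely diverge is the converse: the paper explicitly proves only the implication ``finite symmetry $\Rightarrow$ determining equations'' and leaves the other direction unargued, so your sketch is an addition rather than a rederivation. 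The fixed-point-of-the-flow idea via $G(a)=E_{T_a}(\mu,\sigma)$ and Theorem \ref{compMis} is the right one, but the step ``by uniqueness $G(a)\equiv(\mu,\sigma)$'' is not a finite-dimensional ODE uniqueness statement: the generator $\mathcal{V}$ differentiates its argument in $x$ (through $Y(\mu')$, $L'(Y)$ and $[Y,\sigma']$, whose coefficients themselves depend on $\sigma'$), so $G'(a)=\mathcal{V}(G(a))$ is a first-order evolution PDE in $(a,x)$, and uniqueness must be obtained along the characteristics of $Y$ --- equivalently, by showing that the pulled-back defect $F(a)=L(\Phi_a)+\nabla\Phi_a\cdot\sigma\cdot h_a-\eta_a\,(\mu\circ\Phi_a)$ (and its analogue for $\sigma$) satisfies a homogeneous linear ODE in $a$ at each fixed $x$ with $F(0)=0$, then invoking Gronwall. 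You flag the need to verify the right-hand side of the flow equation and the persistence of non-explosiveness, which are legitimate concerns, but the uniqueness point is the one that actually requires an argument; once supplied, your converse closes the gap the paper leaves open.
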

	\begin{proof}
	We prove only that the property of  $V$ to be an infinitesimal symmetry is a sufficient condition for the validity of determining equations.
	The one parameter group $T_a=\left(\Phi_a,B_a,\eta_a,h_a\right)$ associated with the infinitesimal transformation  $V=\left(Y,C,\tau,H\right)$ is a symmetry if and only if  the following equations are satisfied
	\begin{equation*}
	\begin{split}
		\left(\frac{1}{\eta_a}\left[L\left(\Phi_a\right)+\nabla\Phi_a \cdot \sigma \cdot h_a\right]\right)\circ \Phi^{-1}_a&=\mu,\\
		\left(\frac{1}{\sqrt{\eta_a}}\nabla \Phi_a \cdot \sigma \cdot B^{-1}_a\right) \circ \Phi^{-1}_a&=\sigma.
		\end{split}
	\end{equation*}
	The first equation is equivalent to
	\begin{equation*}
		L\left(\Phi_a\right)+\nabla \Phi_a \cdot \sigma \cdot h_a =\eta_a\mu\left(\Phi
		_a\right).
	\end{equation*}
	Deriving both sides with respect to the parameter $a$ and then evaluating them in $a=0$, we obtain
	\begin{equation*}
	\begin{split}
	&L\left(\partial_a\left(\Phi_a\right)|_{a=0}\right)+\partial_a\left(\nabla \Phi_a \cdot \sigma \right)|_{a=0}\cdot h_0+\nabla \Phi_0 \cdot \sigma  \cdot \partial_a\left(h_a\right)|_{a=0}=\\
	&\partial_a\left(\eta_a\right)|_{a=0}\; \mu \left(\Phi_0\right)+\eta_0\;  \partial_a\left(\mu\left(\Phi_a\right)\right)|_{a=0}
	\end{split}
	\end{equation*}
	and therefore the first determining equation
	\begin{equation*}
L\left(Y\right)+\sigma \cdot H=\tau \mu+	Y\left(\mu\right).
	\end{equation*}
The second equation, not depending on the measure change, coincides with the analogous equation obtained in Theorem \ref{determiningeqs} and this concludes the proof.
\end{proof}

\section{ Doob transformations and symmetries of Kolmogorov equation \label{section_3}}

\subsection{Lie point symmetries of Kolmogorov equation}

In the previous section we showed how the determining equations modify if we include in the stochastic transformation the change of the probability measure underlying the SDE. A natural question  arising in this setting  is the comparison between the determining equations  for the infinitesimal symmetries of an SDE introduced in Theorem \ref{theorem_determining} and the determining equations for the symmetries of the associated  Kolmogorov equation. \\
The symmetries of the Kolmogorov equations usually involves transformations explicitly depending on time that would be lost if we consider only autonomous transformations for our system. For this reason we have to extend our analysis in order to permit also non-autonomous transformations.
In order to achieve this, we use the standard trick, well known in the literature of dynamical system, for transforming a non-autonomous system into an autonomous one.
Indeed, given an SDE of the form \eqref{eqSDE}, we introduce a trivial  extra component $dZ_t=dt$, i.e., we consider the system

\begin{equation}\label{eqTIMESDE}
\begin{split}
dX^i_t  & = \mu^i(X_t) dt+\sigma^i_{\alpha} (X_t) dW^{\alpha}_t  \\
dZ_ t &  = dt
\end{split}
\end{equation}
In this setting, the Kolmogorov equation associated with \eqref{eqSDE} can be written as $L(u)=0$, where $L$ is the infinitesimal generator associated with \eqref{eqTIMESDE}, i.e.
\begin{equation}\label{KOLMOGOROV}
L(u)= A^{ij}\frac {\partial^2 u}{\partial x^i\partial x^j}+\mu^i \frac {\partial u}{\partial x^i} +\frac {\partial u}{\partial z}=0,
\end{equation}
and $A=\frac 12 \sigma\cdot \sigma^T$. In the following we always suppose that the matrix $A$ has constant rank.\\
Equation \eqref{KOLMOGOROV} is a second order partial differential equation in the independent variables $(x^i,z)$ and in the dependent variable $u$ describing the behavior of the mean value of regular functions of the solution process $X_t$. More precisely, a solution $u(x,z)$ to equation \eqref{KOLMOGOROV} is of the form $\mathbb{E}[f(X_\mathcal{T})|X_z=x]=u(x,z)$, with $z\in [0,\mathcal{T}]$ and $u(x, \mathcal{T})=f(x)$ (see, e.g., \cite{Oksendal} Section 8.1).\\

Since \eqref{KOLMOGOROV} is a PDE, it can be seen as a submanifold of the second order jet bundle $J^2(\mathbb{R}^{n+1},\mathbb{R})$ described by the equation $L(u)=0$ (see \cite{Olver} Chapter 2).
In this setting a Lie point infinitesimal symmetry for \eqref{KOLMOGOROV} is a vector field $\Xi$ on $J^0(\mathbb{R}^{n+1},\mathbb{R})$ such that
\begin{equation}\label{SYMMCOND}
  \Xi^{(2)}(L(u))|_{L(u)=0}=0,
\end{equation}
where $\Xi^{(2)}$ denotes the second order prolongation of the vector field $\Xi$.
If we rewrite \eqref{SYMMCOND} for a vector field of the form
\begin{equation}\label{VectorFieldXi}
\Xi=m(x,z)\frac{\partial}{\partial z} +\phi^i(x,z)\frac{\partial}{\partial x^i} +\Psi(x,z,u) \frac{\partial}{\partial u}
\end{equation}
we get the following determining equations for the Lie point infinitesimal symmetries of  \eqref{KOLMOGOROV}
\begin{eqnarray}
 \Psi(x,z,u) & = & -k(x,z)u+k_0(x,z)  \\ \label{eq:detPDE1}
 L(k_0) &= &0 \\ \label{eq:detPDE2}
 L(k)& = & 0 \\ \label{eq:detPDE3}
L(\phi)-\Xi(\mu)+2A \cdot \nabla k-L(m)\mu & = & 0\\ \label{eq:detPDE4}
L(m)A+\Xi(A)-\nabla \phi \cdot  A - A \cdot (\nabla \phi)^T & = & 0\\ \label{eq:detPDE5}
A \cdot \nabla m& = & 0 \label{eq:detPDE6}
\end{eqnarray}

We remark that the function $k_0$ only appears in the first two conditions and corresponds to the trivial symmetry $k_0(x,t)\frac {\partial }{\partial u}$ which takes into account the superposition principle for the solutions to any linear PDEs. Therefore, later on, we omit $k_0$  (and the corresponding condition $L(k_0)=0$) in the determining equations.\\

The comparison between the determining equations for the Lie point infinitesimal  symmetries of the PDE \eqref{KOLMOGOROV} and the determining equation for the stochastic symmetries for the SDE \eqref{eqTIMESDE} arising in Theorem \ref{theorem_determining} is not straightforward, first of all due to the different nature of the involved objects. Indeed, when we look for the infinitesimal symmetries of the PDE we look for a vector field of the form \eqref{VectorFieldXi}, while when we search symmetries for SDE \eqref{eqTIMESDE}, we deal with an infinitesimal transformation $V=(Y,C,\tau,H)$, where $Y$ is a vector field on $\mathbb{R}^{n+1}$, $C:\mathbb{R}^{n+1} \to \mathfrak{so}(m)$,  $\tau:\mathbb{R}^{n+1} \to \mathbb{R}$ and $H:\mathbb{R}^{n+1} \to \mathbb{R}^m$ are smooth functions.

In the following, in order to compare the determining equations arising in these two different settings we restrict the class of  transformations for the probability measure, introducing the notion of Doob transformations.

\subsection{Doob transformations}

The Doob transformation is a special kind of change of measure, and so a particular case of Girsanov transformation, in general related with  Markov processes. Originally introduced by Doob (see \cite{dooblibro}) it has been generalized in many directions  (see \cite{RoWi2000}  for the continuous case and \cite{doob} and  references therein for an overview on the topic).\\
In this  paper we only consider the original Doob transformation, which can be seen as a transformation of the infinitesimal generator $ L$ of a given diffusion process. In the conservative case (i.e. when the probability is conserved at each time) this transformation assumes the following simple form:
$$ L^{\varphi}=\varphi^{-1}L\varphi$$
with $\varphi$ a strictly positive function on the state space $M$. The new diffusion process corresponding to the infinitesimal generator $L^{\varphi}$ has a path measure (i.e. a measure on the pathspace of the process trajectories) which is absolutely continuous with respect to the original path measure with density given by the following  Radon-Nikodym derivative on the time interval $[0, \mathcal{T}]$:
\[\left.\frac{\md \mathbb{Q}_{L^{\varphi}}}{\md \mathbb{P}_{L}}\right|_{\mathcal{F}_\mathcal{T}}=\varphi^{-1}(X_0)\varphi(X_\mathcal{T}).\]

In the following we fix  a non explosive SDE $(\mu,\sigma)$ and a solution $(X,W)$ to it and we give a simplified definition of Doob transformation adapted to our setting.

\begin{definition}\label{Doobderivative}
Given a smooth function  $h:M\rightarrow \mathbb{R}^m$ which is non explosive with respect to $(\mu,\sigma)$, we say that $h$ is a \emph{Doob transformation characterized by the smooth function $\mathfrak{h}:M\rightarrow\mathbb{R}$} if the measure $\mathbb{Q}$ generated by the Girsanov transformation $h$ is such that
\[\frac{\md \mathbb{Q}_\mathcal{T}}{\md \mathbb{P}_\mathcal{T}}=\exp{\left(\mathfrak{h}(X_\mathcal{T})-\mathfrak{h}(X_0) \right)}.\]
\end{definition}
The next result provides suitable conditions on the functions $h$ and $\mathfrak{h}$ in order to guarantee that a Girsanov  transformation is also a Doob one.

\begin{proposition}
Let  $h:M\rightarrow \mathbb{R}^m$ be a smooth function associated with a Girsanov transformation of a non explosive SDE $(\mu, \sigma)$. If  $\mathfrak{h}:M\rightarrow \mathbb{R}^m$ is a smooth function satisfying
\begin{eqnarray}
h_{\alpha}(x)&=&\sigma_{\alpha}^i(x)\partial_{x^i}(\mathfrak{h})(x)\label{eq:Doob1}\\
\frac{1}{2}\sum_{\alpha=1}^m (h_{\alpha}(x))^2&=&-L(\mathfrak{h})(x)\label{eq:Doob2},
\end{eqnarray}
then $h$ is also a Doob transformation.
\end{proposition}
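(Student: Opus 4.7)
The plan is to apply Itô's formula directly to $\mathfrak{h}(X_t)$ and match the result with the logarithm of the Girsanov Radon--Nikodym derivative. Since $\mathfrak{h}$ is smooth and $(X,W)$ solves the non-explosive SDE $(\mu,\sigma)$, Itô's formula gives
\begin{equation*}
\mathfrak{h}(X_t) - \mathfrak{h}(X_0) = \int_0^t \partial_i\mathfrak{h}(X_s)\,dX^i_s + \frac{1}{2}\int_0^t (\sigma\sigma^T)^{ij}(X_s)\,\partial_i\partial_j \mathfrak{h}(X_s)\,ds,
\end{equation*}
and substituting $dX^i_s = \mu^i(X_s)\,ds + \sigma^i_\alpha(X_s)\,dW^\alpha_s$ allows the bounded-variation and martingale parts to be separated, yielding
\begin{equation*}
\mathfrak{h}(X_t)-\mathfrak{h}(X_0) = \int_0^t \sigma^i_\alpha(X_s)\partial_i\mathfrak{h}(X_s)\,dW^\alpha_s + \int_0^t L(\mathfrak{h})(X_s)\,ds,
\end{equation*}
with $L$ the infinitesimal generator from \eqref{eqGENERATORL}.

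The next step is to rewrite this identity using the two hypotheses. Condition \eqref{eq:Doob1} converts the stochastic integral into $\int_0^t h_\alpha(X_s)\,dW^\alpha_s$, while \eqref{eq:Doob2} converts $L(\mathfrak{h})(X_s)$ into $-\tfrac{1}{2}\sum_\alpha h_\alpha(X_s)^2$. Taking $t = \mathcal{T}$ yields
\begin{equation*}
\mathfrak{h}(X_\mathcal{T}) - \mathfrak{h}(X_0) = \int_0^\mathcal{T} h_\alpha(X_s)\,dW^\alpha_s - \frac{1}{2}\int_0^\mathcal{T} \sum_{\alpha=1}^m h_\alpha(X_s)^2\,ds.
\end{equation*}
Exponentiating both sides and comparing with the explicit form of $\md\mathbb{Q}_\mathcal{T}/\md\mathbb{P}_\mathcal{T}$ from Theorem~\ref{Girsanovderivative} gives exactly $\md\mathbb{Q}_\mathcal{T}/\md\mathbb{P}_\mathcal{T} = \exp(\mathfrak{h}(X_\mathcal{T}) - \mathfrak{h}(X_0))$, which is the defining property of a Doob transformation in Definition~\ref{Doobderivative}.

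The only real subtlety, and arguably the main point to be careful about, is that Itô's formula and the Girsanov representation from Theorem~\ref{Girsanovderivative} are applicable: non-explosiveness of $(\mu,\sigma)$ together with the non-explosiveness of $h$ ensures that $Z_t$ is a genuine martingale via Lemma~\ref{lemma_change}, and that $(X,W)$ is defined on all of $[0,\mathcal{T}]$ so that the integrals above make sense. No compactness or boundedness assumption on $\mathfrak{h}$ itself is needed because the identity is a pathwise consequence of Itô's formula; the Doob property is then read off from the resulting exponential identity.
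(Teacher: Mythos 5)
Your proof is correct, and the central computation --- It\^o's formula applied to $\mathfrak{h}(X_t)$ along a solution of $(\mu,\sigma)$, splitting the result into its martingale part $\int \sigma^i_\alpha\partial_i\mathfrak{h}\,dW^\alpha$ and its drift part $\int L(\mathfrak{h})\,ds$ --- is exactly the computation the paper performs. The difference lies in the logical direction in which that computation is used. You substitute the two hypotheses (the gradient condition $h_\alpha=\sigma^i_\alpha\partial_{x^i}\mathfrak{h}$ and the eikonal-type condition $\tfrac12\sum_\alpha h_\alpha^2=-L(\mathfrak{h})$) directly into the It\^o expansion, obtain the identity
\begin{equation*}
\int_0^{\mathcal T} h_\alpha(X_s)\,dW^\alpha_s-\frac12\int_0^{\mathcal T}\sum_{\alpha=1}^m h_\alpha(X_s)^2\,ds=\mathfrak{h}(X_{\mathcal T})-\mathfrak{h}(X_0),
\end{equation*}
and exponentiate; this is a clean forward proof of the stated implication and needs nothing beyond It\^o's formula and the explicit Girsanov density. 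The paper instead starts from the \emph{equality} of the two Radon--Nikodym derivatives, applies It\^o to $\mathfrak{h}$, and then invokes the uniqueness of the canonical semimartingale decomposition and of the martingale representation to match the drift and martingale parts. Read literally, that argument derives the two conditions from the Doob property rather than the other way around, i.e.\ it really establishes the converse (or the equivalence); the uniqueness theorems it cites are only needed for that direction. Your version buys a shorter argument that proves precisely what the proposition asserts, at the cost of not showing that the conditions are also necessary; the paper's version, once the direction of implication is sorted out, additionally yields that necessity. Your closing remarks on why It\^o's formula and the Girsanov representation apply (non-explosiveness of $(\mu,\sigma)$ and of $h$, so that $Z_t$ is a true martingale by the cited lemma) are appropriate and are implicitly assumed in the paper as well.
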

\begin{proof}
By equating  the Radon-Nikodym derivative given in Theorem \ref{Girsanovderivative} with the one given in Definition \ref{Doobderivative} we obtain the equality
\begin{equation}\label{hrelation}
\int_0^\mathcal{T}{h_{\alpha}(X_s)dW^{\alpha}_s}-\frac{1}{2}\int_0^\mathcal{T}{\sum_{\alpha=1}^m(h_{\alpha}(X_s))^2ds}=\mathfrak{h}(X_\mathcal{T})-\mathfrak{h}(X_0).
\end{equation}
Applying It\^o formula to the function $\mathfrak{h}$ we get
\begin{equation}
\mathfrak{h}(X_\mathcal{T})-\mathfrak{h}(X_0) =\int_0^\mathcal{T} L\mathfrak{h}(X_t)dt+\int_0^\mathcal{T} \nabla\mathfrak{h} (X_t)\sigma(X_t) dW_t.
\end{equation}
The proof follows by the uniqueness of canonical semimartingale  decomposition of continuous processes (see, e.g. Section 31 and Definition 31.3 in \cite{RoWi2000}) and by uniqueness of martingale representation theorem for processes adapted to Brownian filtrations  (see, e.g., Theorem 36.1 in \cite{RoWi2000}).
\end{proof}
\begin{remark}
It is important to note that the fact that a change of measure $h$ is a Doob transformation strongly depend on the SDE $(\mu,\sigma)$. Indeed, the right hand side of equation \eqref{eq:Doob1} depends explicitly on $\sigma$ and the right hand side of equation \eqref{eq:Doob2}  depends, through the operator $L$,  on both $\mu$ and $\sigma$.
\end{remark}

Relations \eqref{eq:Doob1} and \eqref{eq:Doob2} allow us to derive also the conditions under which a stochastic transformation $T=(\Phi,B,\eta,h)$ involves the Doob transformation associated with $\mathfrak{h}$. Using relations \eqref{eq:Doob1} and \eqref{eq:Doob2} it is indeed possible to write a condition such that the one-parameter group $T_a$ of the stochastic transformations is generated by an infinitesimal  stochastic transformation $V=(Y,C,\tau,H)$. Indeed if in equations \eqref{eq:Doob1} and \eqref{eq:Doob2} both $h_a$ and $\mathfrak{h}_a$ depend on a parameter $a$ and we take the derivative with respect to that parameter in $a=0$ (when $h_0=0$ as well as $\mathfrak{h}_0=0$), we obtain that there exists a function $k=\partial_a(\mathfrak{h}_a)|_{a=0}$ such that
\begin{eqnarray}
H_{\alpha}(x)&=&\sigma^i_{\alpha}(x)\partial_{x^i}(k)(x)\label{eq:Doob3}\\
0&=&L(k).\label{eq:Doob4}
\end{eqnarray}
Using the previous characterization we can state the following theorem.
\begin{theorem}\label{TeoDetEqDoob}
An infinitesimal  stochastic transformation $V=(Y,C,\tau,H)$ is a symmetry of the SDE $(\mu,\sigma)$ involving only Doob transformations with respect to $(\mu,\sigma)$ if and only if $V$ generates a one parameter group of transformations and  there exists a smooth function $k$ such that the following equations hold
\[
\begin{split}
H-\sigma^T \cdot \nabla k &=0\\
Y(\mu)-L(Y)-\sigma \cdot \sigma^T \cdot \nabla k &=0\\
\left[Y,\sigma \right] +\frac{1}{2}\tau \sigma + \sigma \cdot C&=0\\
L(k)&=0.
\end{split}
\]
\end{theorem}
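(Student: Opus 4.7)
The plan is to derive Theorem~\ref{TeoDetEqDoob} as the conjunction of two already-established characterizations: the general determining equations for infinitesimal stochastic symmetries given in Theorem~\ref{theorem_determining}, and the infinitesimal characterization of the Doob subfamily of Girsanov transformations encoded by \eqref{eq:Doob3}--\eqref{eq:Doob4}. The auxiliary function $k$ plays precisely the role of $\partial_a\mathfrak{h}_a|_{a=0}$, where $\mathfrak{h}_a$ characterizes each finite Doob transformation $h_a$ in the one-parameter group $T_a$.

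For the ``only if'' direction, suppose that $V=(Y,C,\tau,H)$ generates a one-parameter group $T_a=(\Phi_a,B_a,\eta_a,h_a)$ whose fourth component $h_a$ is, at each parameter value, a Doob transformation of $(\mu,\sigma)$ characterized by a smooth $\mathfrak{h}_a$ with $\mathfrak{h}_0\equiv 0$. Differentiating relations \eqref{eq:Doob1}--\eqref{eq:Doob2} at $a=0$ and setting $k:=\partial_a\mathfrak{h}_a|_{a=0}$ reproduces exactly \eqref{eq:Doob3}--\eqref{eq:Doob4}, which are the first and fourth equations of the statement. The second and third equations then follow by substituting $\sigma\cdot H=\sigma\cdot\sigma^T\cdot\nabla k$ into the determining equations provided by Theorem~\ref{theorem_determining}.

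For the ``if'' direction, define $H:=\sigma^T\cdot\nabla k$. The determining equations of Theorem~\ref{theorem_determining} are then immediately satisfied by hypothesis, so $V$ is an infinitesimal symmetry of $(\mu,\sigma)$. It remains to verify that the finite one-parameter group $T_a$ reconstructed from $V$ via the ODE system of Section~\ref{gH} actually consists of Doob transformations. The natural candidate for $\mathfrak{h}_a$ is obtained by integrating $k$ along the $\Phi_a$-flow, for instance $\mathfrak{h}_a(x):=\int_0^a k\circ\Phi_b(x)\,db$ up to composition conventions; one then checks that \eqref{eq:Doob1}--\eqref{eq:Doob2} hold for every $a$ by differentiating both sides in $a$, using the ODE for $h_a$ from Section~\ref{gH}, the hypothesis $L(k)=0$, and the symmetry property $E_{T_a}(\mu,\sigma)=(\mu,\sigma)$ which transports $L$ along the flow.

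The main obstacle is precisely this last step: propagating the infinitesimal Doob condition at $a=0$ to the finite relations \eqref{eq:Doob1}--\eqref{eq:Doob2} for all $a$. The delicate piece is the quadratic term $\tfrac{1}{2}\sum_\alpha h_{a,\alpha}^2$ appearing in \eqref{eq:Doob2}, whose evolution in $a$ is a priori non-linear; its compatibility with the evolution of $L(\mathfrak{h}_a)$ is what forces the simultaneous use of the three inputs $L(k)=0$, invariance of $L$ under $T_a$, and the explicit form of the $h_a$-ODE. Once these are combined, both \eqref{eq:Doob1} and \eqref{eq:Doob2} survive the passage from infinitesimal to finite, completing the equivalence.
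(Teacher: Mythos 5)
Your ``only if'' direction, and the substitution of $H=\sigma^T\cdot\nabla k$ and $L(k)=0$ into the determining equations of Theorem~\ref{theorem_determining}, is exactly what the paper does: its entire proof is the one-line remark that the theorem follows from Theorem~\ref{theorem_determining} together with \eqref{eq:Doob3}--\eqref{eq:Doob4}, the latter obtained (just as you obtain them) by differentiating \eqref{eq:Doob1}--\eqref{eq:Doob2} in $a$ at $a=0$. Where you diverge is in the ``if'' direction: the paper implicitly treats \eqref{eq:Doob3}--\eqref{eq:Doob4} as the \emph{definition} of an infinitesimal transformation ``involving only Doob transformations,'' so for the authors there is nothing more to check, whereas you try to integrate the infinitesimal condition back up to the statement that every finite $h_a$ in the generated group is a genuine Doob transformation. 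That is the more honest reading of the theorem's wording, and you correctly flag it as the delicate point.

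However, your sketch of that extra step does not go through as written. Combining the reconstruction ODE $\partial_a h_a=\eta_a^{-1/2}\,B_a^{-1}\cdot H\circ\Phi_a$ from Section~\ref{gH} with the finite symmetry condition $\nabla\Phi_a\cdot\sigma=\sqrt{\eta_a}\,(\sigma\circ\Phi_a)\cdot B_a$ and $B_a^{T}=B_a^{-1}$, one finds
\[
\partial_a h_a=\frac{1}{\eta_a}\,\sigma^{T}\cdot\nabla\bigl(k\circ\Phi_a\bigr),
\]
whereas your candidate $\mathfrak{h}_a=\int_0^a k\circ\Phi_b\,db$ gives $\partial_a\bigl(\sigma^{T}\cdot\nabla\mathfrak{h}_a\bigr)=\sigma^{T}\cdot\nabla(k\circ\Phi_a)$; these match only when $\eta_a\equiv 1$. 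For a nonconstant time change the integrand must be reweighted by $\eta_a^{-1}$, and then $\eta_a^{-1}\nabla(k\circ\Phi_a)$ need not be a gradient (even modulo $\ker\sigma^{T}$), so the existence of an $\mathfrak{h}_a$ solving \eqref{eq:Doob1} for all $a$ is not automatic, and \eqref{eq:Doob2} would still require a separate verification. If you intend to prove the finite statement you must close this; if you intend only the infinitesimal statement, say so explicitly, in which case your argument collapses to the paper's.
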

\begin{proof}
The proof is an easy consequence of the determining equations in Theorem \ref{theorem_determining} and equations \eqref{eq:Doob3} and \eqref{eq:Doob4}.
\hfill\end{proof}

\subsection{Relation between determining equations for PDEs and SDEs}

In this section we compare the determining equations for the symmetries of an SDE of the form \eqref{eqTIMESDE} with the determining equations for Lie point infinitesimal symmetries of the corresponding Kolmogorov equation \eqref{KOLMOGOROV}. In particular, we prove the following result.
\begin{theorem}\label{daSDEaPDE}
  Let $V=(Y,C,\tau, H)$ be an infinitesimal symmetry of Doob type  for the SDE \eqref{eqTIMESDE}, with $Y=m(x,z)\frac {\partial }{\partial z} + \phi^i(x,z) \frac {\partial }{\partial x^i}$ and $H =\sigma^T\cdot \nabla k$. Then the vector field $
  \Xi=Y-k(x,z)u \frac {\partial }{\partial u}$ is a Lie point infinitesimal symmetry for the associated Kolmogorov equation  \eqref{KOLMOGOROV}.
\end{theorem}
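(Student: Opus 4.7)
The approach is to apply the determining equations of Theorem \ref{theorem_determining} to the extended SDE \eqref{eqTIMESDE}, substitute the Doob ansatz $H=\sigma^{T}\cdot\nabla k$ together with $L(k)=0$, and show that the result is precisely the system \eqref{eq:detPDE1}--\eqref{eq:detPDE6} for the vector field $\Xi=Y-ku\,\partial_u$.

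First I would fix notation for the extended system on $\mathbb{R}^{n+1}$ with coordinates $(x,z)$: the drift is $\tilde\mu=(\mu^{1},\dots,\mu^{n},1)^{T}$, the diffusion matrix $\tilde\sigma$ has entries $\sigma^{i}_{\alpha}$ in rows $i\leq n$ and vanishing row $n+1$, and its generator coincides with the operator $L$ of \eqref{KOLMOGOROV}. In these coordinates $Y=\phi^{i}\partial_{i}+m\,\partial_{z}$. The ansatz $\Psi=-ku$ is then exactly \eqref{eq:detPDE1} with $k_{0}=0$, while $L(k)=0$ is furnished directly by Theorem \ref{TeoDetEqDoob}.

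Second, I would read off the $(n+1)$-th row of each SDE determining equation. In the bracket equation only the term $-\partial_{k}(m)\tilde\sigma^{k}_{\alpha}$ survives on the last row (because $\tilde\sigma^{n+1}_{\alpha}=0$), giving $\sigma^{i}_{\alpha}\partial_{i}m=0$, equivalently $A\cdot\nabla m=0$, which is \eqref{eq:detPDE6}. The $(n+1)$-th row of the drift equation reduces to $-L(m)+\tau=0$, pinning down $\tau=L(m)$. For the spatial part of the drift equation I would use $\sigma^{i}_{\alpha}H_{\alpha}=(\sigma\sigma^{T})^{ij}\partial_{j}k=2A^{ij}\partial_{j}k$ together with $\tau=L(m)$ to obtain
\begin{equation*}
L(\phi^{i})-Y(\mu^{i})+2A^{ij}\partial_{j}k-L(m)\mu^{i}=0,
\end{equation*}
which coincides with \eqref{eq:detPDE4} because $\mu$ is independent of $u$ and therefore $\Xi(\mu^{i})=Y(\mu^{i})$.

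The main obstacle is deriving the quadratic matrix condition \eqref{eq:detPDE5} on $A=\tfrac12\sigma\sigma^{T}$ from the linear bracket equation $[Y,\sigma]^{i}_{\alpha}+\tfrac12\tau\sigma^{i}_{\alpha}+\sigma^{i}_{\beta}C^{\beta}_{\alpha}=0$. My plan is to contract this with $\sigma^{j}_{\alpha}$, symmetrize in $(i,j)$, and use the antisymmetry $C^{\beta}_{\alpha}=-C^{\alpha}_{\beta}$: the two $C$-contributions $\sigma^{j}_{\alpha}\sigma^{i}_{\beta}C^{\beta}_{\alpha}+\sigma^{i}_{\alpha}\sigma^{j}_{\beta}C^{\beta}_{\alpha}$ then cancel. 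What remains rearranges, using $\tau=L(m)$ and symmetry of $A$, into
\begin{equation*}
Y(A^{ij})+L(m)A^{ij}-A^{kj}\partial_{k}\phi^{i}-A^{ik}\partial_{k}\phi^{j}=0,
\end{equation*}
which is \eqref{eq:detPDE5} since $\Xi(A)=Y(A)$. This antisymmetry-based cancellation is the only genuinely non-routine step; all other computations amount to bookkeeping in the extended coordinates.
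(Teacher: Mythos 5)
Your proposal is correct and follows essentially the same route as the paper's proof: both verify the Kolmogorov determining equations term by term from the SDE determining equations of Theorem \ref{TeoDetEqDoob}, identify $\tau=L(m)$, use that $\Xi(\mu)=Y(\mu)$ and $\Xi(A)=Y(A)$ since $\mu$ and $A$ do not depend on $u$, and obtain the quadratic condition on $A$ by contracting the bracket equation with $\sigma^{T}$, symmetrizing in $(i,j)$, and cancelling the $C$-contributions via antisymmetry. The only differences are bookkeeping: you unpack the $(n+1)$-th rows of the extended system explicitly where the paper simply lists the resulting equations, and your equation references are shifted by one relative to the paper's labels.
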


\begin{proof}
 Since $V$ is a symmetry for \eqref{eqTIMESDE}, Theorem \ref{TeoDetEqDoob} provides the following determining equations for $V$
\begin{eqnarray}
  L(k) &=& 0  \label{eq:detSDE1} \\
  L(m) &=& \tau  \label{eq:detSDE2} \\
  L(\phi)-Y(\mu) +\sigma \cdot \sigma^T \cdot \nabla k-\tau \mu &=& 0  \label{eq:detSDE3} \\
  Y(\sigma)-\nabla \phi \cdot \sigma +\frac 12 \tau \sigma +\sigma \cdot C &=& 0 \label{eq:detSDE4} \\
  \sigma^T \cdot \nabla m &=& 0  \label{eq:detSDE5}
\end{eqnarray}
We want to prove that, if $A=\frac 12 \sigma \cdot \sigma^T$ is of constant rank, the vector field $\Xi=Y-ku\frac {\partial}{\partial u}$  satisfies the determining equation for the Kolmogorov PDE \eqref{KOLMOGOROV}.\\
The first condition, i.e. $L(k)=0$, is the same in both the sets of determining equations and, since $\mu^i$ do not depend on $u$, $\Xi(\mu^i)=Y(\mu^i)$ and equations \eqref{eq:detSDE2} and \eqref{eq:detSDE3} imply \eqref{eq:detPDE3}. Moreover, multiplying on the left \eqref{eq:detSDE5} by $\sigma$, we immediately get \eqref{eq:detPDE5}. In order to prove that \eqref{eq:detPDE4} holds, we consider the right multiplication of \eqref{eq:detSDE4} by $\sigma^T$ and we get, using $\sigma \cdot \sigma^T=2A$,
\begin{equation}\label{eq:detSDE4var}
Y(\sigma)\sigma^T- 2  \nabla \phi \cdot  A+  \tau A+\sigma \cdot  C \cdot  \sigma^T  =0
\end{equation}
Hence, considering the semi-sum of \eqref{eq:detSDE4var} and its transposed and using the fact that $C$ is an antisymmetric matrix, we get
\begin{equation*}
  Y(A)+\tau A-  \nabla \phi \cdot  A -A\cdot  (\nabla \phi)^T=0.
\end{equation*}
Since $A$ does not depend  on $u$, $Y(A)=\Xi(A)$ and, using \eqref{eq:detSDE2}, we get \eqref{eq:detPDE4}
\end{proof}

Now we prove the converse of the previous theorem, showing that with any infinitesimal Lie point symmetry of the Kolmogorov equation it is possible to associate an  infinitesimal Doob symmetry of the corresponding SDE.

\begin{theorem}\label{daPDEaSDE}
Let $\Xi=m(x,z)\frac {\partial }{\partial z} + \phi^i(x,z) \frac {\partial }{\partial x^i}-k(x,z)u\frac {\partial }{\partial u} $ be a  Lie point symmetry for the PDE  \eqref{KOLMOGOROV}, with $A=\frac 12 \sigma \cdot \sigma^T$ of constant rank. Then there exist smooth functions $C:\mathbb{R}^{n+1}\to \mathfrak{so}(m)$ and $\tau: \mathbb{R}^{n+1}\to \mathbb{R}$, such that $V=(Y,C,\tau, H)$, with  $Y=\Xi+k(x,z)u \frac {\partial }{\partial u}$  and $H=\sigma^T \cdot \nabla k$,  is a Doob infinitesimal symmetry for the SDE \eqref{eqTIMESDE}.

\end{theorem}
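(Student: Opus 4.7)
The plan is to define, from the data of $\Xi$, a candidate infinitesimal Doob symmetry $V=(Y,C,\tau,H)$ and then to check the five determining equations \eqref{eq:detSDE1}--\eqref{eq:detSDE5} displayed in the proof of Theorem \ref{daSDEaPDE}. I set $Y=m\partial_z+\phi^i\partial_{x^i}$, $H=\sigma^T\nabla k$, and $\tau=L(m)$; the antisymmetric field $C$ will be produced in the main step below. With these choices, \eqref{eq:detSDE1} coincides with \eqref{eq:detPDE3}; \eqref{eq:detSDE2} is our definition of $\tau$; \eqref{eq:detSDE3} coincides with \eqref{eq:detPDE4} after replacing $\Xi(\mu)$ by $Y(\mu)$ (legitimate since $\mu$ does not depend on $u$) and $L(m)$ by $\tau$; and \eqref{eq:detSDE5} follows from $A\nabla m=0$ in \eqref{eq:detPDE6}, because $(\nabla m)^T A(\nabla m)=\tfrac{1}{2}\|\sigma^T\nabla m\|^2$ forces $\sigma^T\nabla m=0$.

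The main obstacle is equation \eqref{eq:detSDE4}, namely the construction of a smooth antisymmetric $C:\mathbb{R}^{n+1}\to\mathfrak{so}(m)$ satisfying
\[
\sigma C=R,\qquad R:=\nabla_x\phi\cdot\sigma-Y(\sigma)-\tfrac{1}{2}L(m)\,\sigma.
\]
Since $\sigma$ is only of constant rank (not necessarily full row rank), solvability of this linear system by an antisymmetric $C$ is equivalent to two linear-algebraic conditions on $R$: (a) every column of $R$ lies in $\mathrm{im}(\sigma)$; and (b) $R\sigma^T$ is antisymmetric. Condition (b) is a direct computation: using $Y(\sigma)\sigma^T+\sigma Y(\sigma)^T=2Y(A)$, the symmetric part of $R\sigma^T$ equals $\nabla_x\phi\cdot A+A\cdot(\nabla_x\phi)^T-Y(A)-L(m)A$, which vanishes by \eqref{eq:detPDE5}.

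Condition (a) is the subtle point and is where the constant-rank hypothesis on $A$ is essential. I fix a point and pick $v\in\ker A=\ker\sigma^T$. By constant rank I can extend $v$ to a smooth local vector field with $\sigma^Tv\equiv 0$. Differentiating this identity along $Y$ yields $v^T Y(\sigma)=-Y(v)^T\sigma$. Evaluating \eqref{eq:detPDE5} on $v$ and using $Av=0$ gives $Y(A)v=A(\nabla\phi)^T v$; combining with $Y(A)v=\tfrac{1}{2}\sigma Y(\sigma)^T v=-A\,Y(v)$ yields $A[Y(v)+(\nabla\phi)^T v]=0$, so $Y(v)+(\nabla\phi)^T v\in\ker\sigma^T$. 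Substituting these identities into $v^T R=v^T\nabla\phi\cdot\sigma-v^T Y(\sigma)-\tfrac{1}{2}L(m)\,v^T\sigma$ and using $v^T\sigma=0$, I obtain $v^T R=(Y(v)+\nabla\phi^T v)^T\sigma=0$, which is precisely (a).

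Given (a) and (b), a smooth antisymmetric $C$ with $\sigma C=R$ is constructed in a local basis adapted to the splitting $\mathbb{R}^m=\ker\sigma\oplus(\ker\sigma)^\perp$ and an analogous splitting of $\mathbb{R}^n$, both smooth by the constant-rank hypothesis. In such a basis $\sigma$ has an invertible $r\times r$ block $\Sigma$ in the top-left corner and zeros elsewhere; by (a), the last $n-r$ rows of $R$ vanish, and I set the top-left $r\times r$ block of $C$ to $\Sigma^{-1}R^{11}$ (antisymmetric thanks to (b)), the top-right block to $\Sigma^{-1}R^{12}$, the bottom-left block to minus its transpose, and the remaining $(m-r)\times(m-r)$ block to zero. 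Smoothness of $C$ is automatic, and the one-parameter group property of $V$ is inherited from that of the PDE symmetry $\Xi$ on $\mathbb{R}^{n+1}$. This completes the verification of \eqref{eq:detSDE1}--\eqref{eq:detSDE5} and shows that $V$ is the desired Doob infinitesimal symmetry of \eqref{eqTIMESDE}.
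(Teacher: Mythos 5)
Your proof is correct and, on the decisive step, more careful than the one in the paper. Both arguments share the same skeleton: set $\tau=L(m)$, $H=\sigma^T\cdot\nabla k$, observe that the drift equations and $\sigma^T\cdot\nabla m=0$ follow directly from the PDE determining equations, and then reduce everything to producing an antisymmetric $C$ with $\sigma\cdot C=R$, where $R=\nabla\phi\cdot\sigma-Y(\sigma)-\tfrac{1}{2}L(m)\,\sigma$. The paper solves this last system by left-multiplying with $\sigma^T$ and writing $C=(\sigma^T\cdot\sigma)^{-1}\sigma^T\cdot\nabla\phi\cdot\sigma-(\sigma^T\cdot\sigma)^{-1}\sigma^T\cdot Y(\sigma)-\tfrac{1}{2}\tau I$, which (i) presupposes that $\sigma^T\cdot\sigma$ is invertible, i.e.\ that $\sigma$ has full column rank $m$ --- strictly stronger than the stated constant-rank hypothesis on $A$ --- and (ii) only verifies antisymmetry of this candidate, never that it satisfies the original equation $\sigma\cdot C=R$ rather than the projected one $\sigma^T\sigma\, C=\sigma^T R$; the missing compatibility condition is exactly your condition (a), that the columns of $R$ lie in $\mathrm{im}\,\sigma$, and it is not automatic unless $\sigma$ is onto. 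You isolate (a) and (b) as the precise solvability conditions, prove (b) from the diffusion determining equation essentially as the paper proves antisymmetry, and --- this is the genuinely new content --- prove (a) by extending a kernel vector of $A$ to a smooth local section of $\ker\sigma^T$, differentiating along $Y$, and using $\ker A=\ker\sigma^T$; the block construction of $C$ adapted to $\ker\sigma\oplus(\ker\sigma)^{\perp}$ then works at constant rank without assuming $\sigma^T\sigma$ invertible. The only points you gloss over are harmless: globality of the locally constructed $C$ (the canonical local solutions with vanishing $\ker\sigma\to\ker\sigma$ block agree on overlaps, or one can patch with a partition of unity since the antisymmetric solutions of $\sigma\cdot C=R$ form an affine space), and the existence of the one-parameter group generated by $V$, which the paper does not address either.
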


\begin{proof} As in the proof of Theorem \ref{daSDEaPDE}, \eqref{eq:detSDE1} and \eqref{eq:detPDE2} coincide and, in order to satisfy \eqref{eq:detSDE2} and \eqref{eq:detSDE3} we have to choose $\tau=L(m)$ and exploit the fact that $\Xi(\mu^i)=Y(\mu^i)$, since $\mu^i$ do not depend on $u$. Moreover,  the hypothesis of constant rank for $A$ ensures that  \eqref{eq:detPDE5} implies  \eqref{eq:detSDE5}. Hence, we have to choose the matrix $C$ in order to satisfy \eqref{eq:detSDE4}. Considering the left product of \eqref{eq:detSDE4} with $\sigma^T$, we get the following expression for $C$, where, once again, we exploit the fact that $A$ is of constant rank
\begin{equation*}
  C=(\sigma^T\cdot \sigma)^{-1}\sigma^T\nabla \phi \cdot \sigma -(\sigma^T\cdot \sigma)^{-1}\sigma^TY(\sigma)-\frac 12 \tau I.
\end{equation*}
The last step is to prove that the matrix $C$ defined above  is antisymmetric. Following the same line of the proof of Theorem \ref{daSDEaPDE}, if we consider  the sum of \eqref{eq:detSDE4var} with its transpose, we find
\begin{equation*}
Y(\sigma \cdot \sigma^T)+2\tau A -2 \nabla \phi \cdot  A -2A \cdot (\nabla\phi)^T +\sigma \cdot (C+C^T) \cdot \sigma^T=0
\end{equation*}
and, using \eqref{eq:detPDE4}, we get $\sigma \cdot  (C+C^T)\cdot \sigma^T=0$.  Therefore, since $A$ is of constant rank,  $C+C^T=0$, i.e. $C$ is an antisymmetric matrix, and this concludes the proof.
\end{proof}

\begin{remark}
Theorems \ref{daSDEaPDE} and \ref{daPDEaSDE} show that there exists a one-to-one correspondence between infinitesimal symmetries of Doob type of the SDE and Lie point infinitesimal symmetries of the corresponding Kolmogorov equation. On the other hand, if we allow more general changes of the probability measure of Girsanov type, the family of symmetries of the SDE introduced in this paper is  wider than the family  of the symmetries for the corresponding Kolmogorov equation. The new (non Doob type) symmetries can belong to two different families.  \\
The first family consists of non Doob symmetries such that the function $k$ satisfying $H=\sigma^T\nabla k$ does exist, but does not satisfies $L(k)=0$. We call these kind of symmetries \emph{almost Doob symmetries}. \\
The second family of non Doob symmetries contains  symmetries $V=(Y,C,\tau, H)$ such that does not exist any function $k$ for which  $H= \sigma^T\nabla k $. We remark that the second family is not empty only for SDEs driven by  $m$ dimensional  Brownian motions with $m>1$, while for SDEs driven by one-dimensional Brownian motions  only almost Doob symmetries exist. In the next section we provide some examples of this fact and, in particular, we find almost Doob symmetries for one-dimensional Brownian motion,  Ornstein-Uhlenbeck process and  CIR model. Moreover, we show that  two-dimensional Brownian motion admits non-Doob infinitesimal symmetries of both the types described above.
\end{remark}

\section{Examples \label{section_4}}

In this last section we propose some non-trivial examples, in order to show that  the introduction of the probability measure change allows us  to obtain a larger number of symmetries with respect to the methods available in the previous literature.

\subsection{One-dimensional Brownian motion}

Let us consider  the following SDE
\begin{equation*}
\binom{dX_t}{dZ_t}=\binom{0}{1}dt+\binom{1}{ 0 }  dW_t^1
\end{equation*}
corresponding to a one-dimensional Brownian motion, where the second component, admitting solution $Z_t=t$, has been introduced in order to include in our setting also time dependent transformations.
\par
If we are interested in finding  symmetries of Doob type for this SDE, we have to look for
\begin{equation}\label{GenSymmD1}
V=\left(Y,C,\tau, H\right)=\left(\binom{f\left(x,z\right)}{m\left(x,z\right)},0,\tau\left(x,z\right), H_1(x,z)\right)
\end{equation}
where $H_1=k_x$ and the functions $k,f,m,\tau$ satisfy the following determining equations
\begin{equation*}
\begin{split}
\frac{1}{2}k_{xx}+k_{z}&=0\\
\frac{1}{2}f_{xx}+f_{z}+k_x&=0\\
\frac{1}{2}m_{xx}+m_{z}&= \tau \\
f_{x}&=\frac{1}{2}\tau\\
m_{x}&=0.
\end{split}
\end{equation*}
In order to solve this system, we look for a function $k(x,z)$ satisfying the first equation.  If we chose $k=c_1+c_2x+c_3(x^2-z)$, we can solve the remaining equations and we find
 \begin{equation*}
\begin{split}
f(x,z)&=-c_2z-2c_3xz+c_4x+c_5\\
m(x,z)&= -2c_3z^2+2c_4z+c_6\\
\tau(x,z)&= -4c_3z+2c_4\\
\end{split}
\end{equation*}
 Therefore, the Doob symmetries for the one-dimensional Brownian motion associated with the function $k=c_1+c_2x+c_3(x^2-z)$ are

  \begin{equation*}
V_1=\left(\binom{z}{0}, 0,0, -1\right)
\end{equation*}

  \begin{equation*}
V_2=\left(\binom{2xz}{2z^2}, 0,4z, -2x\right)
\end{equation*}

  \begin{equation*}
V_3=\left(\binom{x}{2z},0,2, 0\right)
\end{equation*}

  \begin{equation*}
V_4=\left(\binom{1}{0}, 0,0, 0\right)
\end{equation*}

 \begin{equation*}
V_5=\left(\binom{0}{1}, 0,0, 0\right)
\end{equation*}

Hence, exploiting Theorem \ref{daSDEaPDE} and adding  the symmetry $V_6$ corresponding to  $H_1=0$, we recover, in our framework, the following generators of the well known six dimensional algebra of Lie point symmetries for the Kolmogorov equation associated with one dimensional Brownian motion
 \begin{equation*}
\begin{split}
\Xi_1&= z \frac {\partial}{\partial x}+xu\frac {\partial}{\partial u}\\
\Xi_2&= 2xz \frac {\partial}{\partial x}+ 2z^2 \frac {\partial}{\partial z}+ u(x^2-z)\frac {\partial}{\partial u}\\
\Xi_3&= x \frac {\partial}{\partial x}+2z\frac {\partial}{\partial z}\\
\Xi_4&= \frac {\partial}{\partial x}\\
\Xi_5&=  \frac {\partial}{\partial z}\\
\Xi_6&= u\frac {\partial}{\partial u}.\\
\end{split}
\end{equation*}

Let us now consider the determining equations for general symmetries (not necessarily of Doob type) of the one-dimensional Brownian motion. In this case the determining equations provided by Theorem \ref{theorem_determining} are
\begin{equation*}
\begin{split}
\frac{1}{2}f_{xx}+f_{z}+H_1&=0\\
\frac{1}{2}m_{xx}+m_{z}&= \tau \\
f_{x}&=\frac{1}{2}\tau\\
m_{x}&=0.\\
\end{split}
\end{equation*}
Here we have no conditions on $H_1$ and we can find  the following infinite-dimensional family of almost Doob symmetries (depending on an arbitrary deterministic function of the time $ \alpha(z)$)
  \begin{equation*}
\tilde V_{\alpha}=
\left( \left(
               \begin{array}{c}
                 \frac 12 \alpha(z) x \\
                 \int \alpha(z) \, dz\\
               \end{array}
             \right),
             0,\alpha(z),-\frac 12 x \alpha'(z)
              \right).
\end{equation*}
 We remark that, even if $H$ satisfies the condition $H=H_1=\sigma^T \nabla k$ for $k=-\frac 14 x^2\alpha'(z)$, we cannot ensure that
 \begin{equation*}
\frac 12 k_{xx}+ k_{z}=0
 \end{equation*}
 since this condition holds only if $\alpha'(z)=0$. Therefore, for general (non constant) functions $\alpha(z)$,
  $\tilde V_{\alpha}$ is not a Doob symmetry and Theorem \ref{daSDEaPDE} does  not apply. This means that we cannot associate with  $\tilde V_{\alpha}$ any symmetry of the  Kolmogorov PDE $\frac 12 u_{xx}+u_z=0$.

\begin{remark}\label{remark_timesymmetry}
The symmetries $V_{\alpha}$ are related with the \emph{random time invariance} of Brownian motion (see Section \ref{section_1}).  Indeed, if we consider the deterministic  time change $t'=f(t)$, then $W'_{t'}=\int_0^{t'}{\sqrt{f'(f^{-1}(s))}dW_{f^{-1}(s)}}$ is a new Brownian motion (see \cite{DMU}). The symmetry $\tilde V_{\alpha}$ provides a generalization of the previous fact,  considering $\tilde{W}_{t'}=\sqrt{f'(f^{-1}(t'))}W_{f^{-1}(t')}$ instead of the integral $\int_0^{t'}{\sqrt{f'(f^{-1}(s)}dW_{f^{-1}(s)}}$. Obviously, since $W'_{t'}$ is a $\mathbb{P}$-Brownian motion, $\tilde{W}_{t'}$ cannot be a $\mathbb{P}$-Brownian motion. But, since $\tilde V_{\alpha}$ is a symmetry, using Girsanov transformation we have that $\tilde{W}_{t'}$ is a $\mathbb{Q}$-Brownian motion, where $\mathbb{Q}$ has density with respect to $\mathbb{P}$ given by
\[\frac{\text{d}\mathbb{Q}_\mathcal{T}}{\text{d}\mathbb{P}_\mathcal{T}}=\exp\left(\int_0^{\mathcal{T}}{\frac{f''(s)}{2\sqrt{f(s)}}W_s dW_s}-\frac{1}{2}\int_0^\mathcal{T}{\left(\frac{f''(s)W_s}{2\sqrt{f(s)}}\right)^2ds} \right). \]
\end{remark}

\subsection{Ornstein-Uhlenbeck model}
Let us consider the Ornstein-Uhlenbeck process,  solution to the following SDE
\begin{equation*}
\binom{dX_t}{dZ_t}=\binom{aX_t+b}{1}dt+\binom{1 }{ 0  } dW_t^1
\end{equation*}
\\
where, as in the previous example, the second component has been introduced in order to include in our framework time dependent transformations. In this case, if we look for a Doob type symmetries of the form \eqref{GenSymmD1} and we consider $H_1=k_x$, the determining equations are

\begin{equation}
\begin{split}
\frac{1}{2}k_{xx}+\left(ax+b\right)k_{x}+k_z&=0\\
\frac{1}{2}f_{xx}+\left(ax+b\right)f_x+f_z&=af+ \tau \left(ax+b\right)-k_x\\
\frac{1}{2}m_{xx}+\left(ax+b\right)m_x+m_z&=\tau\\
f_x&=\frac{1}{2} \tau\\
m_x&=0.\\
\end{split}\label{deteqOU}
\end{equation}
With a suitable ansatz we can find a solution to the first equation of the form
\begin{equation*}
\begin{split}
k\left(x,z\right)&=c_1+c_2\left(ax+b\right)e^{-az}+c_3\left(ax^2+2bx+\dfrac{a+2b^2}{2a}\right)e^{-2az}\\
\end{split}
\end{equation*}
so that
\begin{equation*}
\begin{split}
H_1\left(x,z\right)&=c_2ae^{-az}+2c_3\left(ax+b\right)e^{-2az}.
\end{split}
\end{equation*}
The remaining determining equations read
\begin{equation}
\begin{split}
-\left(ax+b\right)f_x+f_z&=af-c_2ae^{-az}-2c_3(ax+b)e^{-2az}\\
m_z&=\tau\\
\tau&=2f_x\\
m_x&=0,
\end{split}
\end{equation}
whose solution is
\begin{equation*}
\begin{split}
f(x,z))&=\frac {c_3}{2a} e^{-2az}(ax+b)+\frac {c_4}{2a} e^{2az}(ax+b) +\frac {c_2}{2}e^{-az} +c_5e^{az}\\
m(x,z)&= -\frac {c_3}{2a} e^{-2az}+\frac {c_4}{2a}e^{2az} +c_6\\
\tau (x,z) &= c_3 e^{-2az}+c_4 e^{2az}.
\end{split}
\end{equation*}
Therefore we have the following  family of infinitesimal symmetries of Doob type for the Ornstein-Uhlenbeck SDE
\begin{equation*}
V_1=\left(\binom{\frac{1}{2}e^{-az}}{0}, 0,0, ae^{-az}\right)
\end{equation*}

\begin{equation*}
V_2=\left(\binom{\frac{1}{2a}e^{-2az} (ax+b)}{-\frac {1}{2a} e^{-2az}}, 0,e^{-2az}, 2(ax+b)e^{-2az}\right)
\end{equation*}

\begin{equation*}
V_3=\left(\binom{\frac{1}{2a}e^{2az}(ax+b)}{\frac {1}{2a} e^{2az}}, 0,e^{2az}, 0\right)
\end{equation*}

\begin{equation*}
V_4=\left(\binom{e^{az}}{0}, 0,0, 0\right)
\end{equation*}

\begin{equation*}
V_5=\left(\binom{0}{1}, 0,0, 0\right).
\end{equation*}

Using Theorem \ref{daSDEaPDE}, we can associate with $V_i$ the generators for the  symmetry algebra of the Kolmogorov equation associated with the Ornstein-Uhlenbeck SDE, where the vector field $\Xi_6$ corresponds to the trivial symmetry $V_6$ associated with $H_1=0$.
 \begin{equation*}
\begin{split}
\Xi_1&= \frac 12 e^{-az} \frac {\partial}{\partial x}-u(ax+b)e^{-az}\frac {\partial}{\partial u}\\
\Xi_2&= \frac {1}{2a} e^{-2az}(ax+b) \frac {\partial}{\partial x}-\frac {1}{2a} e^{-2az} \frac {\partial}{\partial z}- ue^{-2az} (ax^2+2bx+\frac {a+2b^2}{2a})\frac {\partial}{\partial u}\\
\Xi_3&= \frac {1}{2a} e^{2az}(ax+b) \frac {\partial}{\partial x}+ \frac {1}{2a} e^{2az}\frac {\partial}{\partial z}\\
\Xi_4&= e^{az}\frac {\partial}{\partial x}\\
\Xi_5&=  \frac {\partial}{\partial z}\\
\Xi_6&= u\frac {\partial}{\partial u}.\\
\end{split}
\end{equation*}

Let us now consider the determining equations for general symmetries (not necessarily of Doob type)  for the Ornstein-Uhlenbeck process. In this case the determining equations provided by Theorem \ref{theorem_determining} are
\begin{equation}
\begin{split}
\frac{1}{2}f_{xx}+\left(ax+b\right)f_x+f_z&=af+ \tau \left(ax+b\right)-H_1\\
\frac{1}{2}m_{xx}+\left(ax+b\right)m_x+m_z&=\tau\\
f_x&=\frac{1}{2} \tau\\
m_x&=0.\\
\end{split}\label{deteqOU}
\end{equation}
Solving these equations we find the following  symmetries
\begin{equation*}
\tilde V_1=\left(\binom{\frac{1}{2}(\frac 1a +e^{2az})x-\frac 12 \frac {b}{a^2}+\frac 12 \frac ba e^{2az} }{\frac z a + \frac {1}{2a} e^{2az}}, 0,\frac 1 a +e^{2az}, x \right)
\end{equation*}

\begin{equation*}
\tilde V_2=\left(\binom{\frac {x}{2a}-\frac {b}{2a^2}+e^{az}}{\frac za}, 0,\frac 1a, x\right).
\end{equation*}
We remark that $\tilde V_1$ and $\tilde V_2$ are not symmetries of Doob type, since we have $H_1=x=\sigma^T \nabla k$, for $k=\frac 12 x^2+ k_1(z)$, but $L(k)\not= 0$.

\subsection{CIR model}
In this section we look for the symmetries of the CIR model

\begin{equation*}
\binom{dX_t}{dZ_t}=\binom{aX_t+b}{1}dt+
\binom{\sigma_0\sqrt{X_t}}{0}
dW_t^1,
\end{equation*}
which is widely used in mathematical finance to describe the behavior of the interest rates (see \cite{Brigo2006} Chapter 3). We remark that CIR model is also important for the fact that closed formulas for its Laplace transform and its transition probability are known. In particular, the advantages of the knowledge of Lie point symmetries of the Kolmogorov equation associated with CIR model and  their relationship with closed formulas is  discussed in \cite{Craddock2004}.\\

If we consider  infinitesimal transformations $V$ of the form \eqref{GenSymmD1} and $H_1=\sigma_0\sqrt{x}k_x$, the determining equations for the symmetries of Doob type for the CIR model are
\begin{equation*}
\begin{split}
\frac{1}{2}\sigma_0^2 x k_{xx}+\left(ax+b\right)k_x+k_z&=0\\
\frac{1}{2}\sigma_0^2 x f_{xx}+\left(ax+b\right)f_x+f_z&=af+(ax+b)\tau -\sigma^2_0x k_x  \\
\frac{1}{2}\sigma_0^2 x m_{xx}+\left(ax+b\right)m_x+m_z&=\tau\\
\sigma_0\sqrt{x}f_x-\frac{1}{2}\sigma_0\frac{1}{\sqrt{x}}f&=\frac{1}{2}\sigma_0\sqrt{x} \tau\\
\sigma_0\sqrt{x} \ m_x&=0.\\
\end{split}
\end{equation*}
The first step to solve these equations is to find a  function $k$ satisfying the first determining equation. If we chose
\begin{equation*}
\begin{split}
k\left(x,z\right)&=c_1\left(ax+b\right)e^{-az}+c_2\\
\end{split}
\end{equation*}
we can solve the remaining determining equations and we obtain
\begin{equation*}
\begin{split}
f(x,z)&=\frac 12 c_1\sigma_0^2xe^{-az}+c_3xe^{az}\\
m(x,z)&=-\frac {1}{2a}c_1\sigma_0^2e^{-az}+\frac {c_3}{a}e^{az}+c_4 \\
\tau (x,z) &=\frac{ 1}{2}c_1\sigma_0^2 e^{-az}+c_3 e^{az}
\end{split}
\end{equation*}
corresponding to the infinitesimal symmetries
\begin{equation*}
V_1=\left(\binom{\frac{\sigma_0^2}{2}xe^{-az}}{-\frac{\sigma_0^2}{2a}e^{-az}},0,\frac{\sigma_0^2}{2}e^{-az},\sigma_0a\sqrt{x}e^{-az}\right)
\end{equation*}
\begin{equation*}
V_2=\left(\binom{xe^{az}}{\frac 1a e^{az}},0,e^{az},0\right)
\end{equation*}
\begin{equation*}
V_3=\left(\binom{0}{1},0,0,0\right).
\end{equation*}
Once again we can exploit Theorem \ref{daSDEaPDE} in order to find the Lie point infinitesimal symmetries for the Kolmogorov equation associated with the  CIR model, i.e.
 \begin{equation*}
\begin{split}
\Xi_1&= \frac{\sigma_0^2}{2}xe^{-az}\frac {\partial}{\partial x}-\frac{\sigma_0^2}{2a}e^{-az}\frac {\partial}{\partial z} -u(ax+b)e^{-az}\frac {\partial}{\partial u}\\
\Xi_2&= xe^{az} \frac {\partial}{\partial x}+\frac 1a e^{az}\frac {\partial}{\partial z}\\
\Xi_3&=\frac {\partial}{\partial z}\\
\Xi_4&= u\frac {\partial}{\partial u}
\end{split}
\end{equation*}
where the last symmetry, as usual, corresponds  to the choice $H_1=0$. They form the generators of all non trivial symmetries of Kolmgorov CIR model equation (see \cite{Craddock2004}).\\

Let us now consider the determining equations for general symmetries (not necessarily of Doob type)  for the CIR model.
In this case the determining equations provided by Theorem \ref{theorem_determining} are
\begin{equation*}
\begin{split}
\frac{1}{2}\sigma_0^2 x f_{xx}+\left(ax+b\right)f_x+f_z&=af+(ax+b)\tau -\sigma_0\sqrt x H_1  \\
\frac{1}{2}\sigma_0^2 x m_{xx}+\left(ax+b\right)m_x+m_z&=\tau\\
\sigma_0\sqrt{x}f_x-\frac{1}{2}\sigma_0\frac{1}{\sqrt{x}}f&=\frac{1}{2}\sigma_0\sqrt{x} \tau\\
\sigma_0\sqrt{x} \ m_x&=0,\\
\end{split}
\end{equation*}
and we can find the solution
\begin{equation*}
\begin{split}
f(x,z)&=c_1\sqrt{x}+c_2 \frac {\sigma_0}{a} x \\
m(x,z)&=c_2 \frac {\sigma_0}{a}z\\
\tau (x,z) &=c_2 \frac{ \sigma_0}{a}\\
H_1&=c_1\left(\frac{\sigma_0}{8x}-\frac{b}{2\sigma_0 x}+\frac{a}{2\sigma_0 } \right)+ c_2 \sqrt x\\
\end{split}
\end{equation*}
corresponding to the infinitesimal symmetries
\begin{equation*}
\tilde V_1=\left(\binom{\sqrt{x}}{0},0,0,\frac{\sigma_0}{8x}-\frac{b}{2\sigma_0 x}+\frac{a}{2\sigma_0}\right)
\end{equation*}
\begin{equation*}
\tilde V_2=\left(\binom{\frac{\sigma_0}{a}x }{\frac{\sigma_0}{a}z},0,\frac{\sigma_0}{a},\sqrt x\right).
\end{equation*}
We remark that $\tilde V_1$ and $\tilde V_2$ are not symmetries of Doob type, since we have $H_1=\sqrt x=\sigma^T \nabla k_i$, for $k_1=-\frac{1}{4\sqrt{x}}+\frac{b}{\sigma_0^2\sqrt{x}}+\frac{a\sqrt x}{\sigma_0^2}$ and $k_2=\frac { x}{\sigma_0} $ but, as in the previous cases, $k_i$ do not satisfy $L(k_i)= 0$.

\subsection{Two-dimensional Brownian motion}

Let us consider  the following SDE
\begin{equation*}
  \left(
  \begin{array}{c}
    dX_t \\
    dY_t \\
    dZ_t\\
  \end{array}
\right)=
\left(
  \begin{array}{c}
    0 \\
    0 \\
    1\\
  \end{array}
\right) dt + \left(
               \begin{array}{cc}
                 1 & 0 \\
                 0 & 1 \\
                 0 & 0\\
               \end{array}
             \right)
                \left(
  \begin{array}{c}
    dW^1_t \\
    dW^2_t \\
  \end{array}
\right)
\end{equation*}
corresponding to a two-dimensional Brownian motion. As in the previous examples, the third component $dZ_t=dt$ has been introduced in order to include time dependent transformations in our setting.
\par
When we look for Doob type infinitesimal symmetries for this SDE, we have to find an infinitesimal transformation $V=\left(Y,C,\tau, H\right)$
\begin{equation*}
V=
\left( \left(
               \begin{array}{c}
                 f(x,y,z) \\
              g(x,y,z)\\
                 m(x,y,z)\\
               \end{array}
             \right),
             \left(
               \begin{array}{cc}
                 0 & c(x,y,z) \\
                 -c(x,y,z) & 0\\
                               \end{array}
             \right),\tau\left(x,y,z\right),
             \left(
  \begin{array}{c}
    H_1(x,y,z)\\
    H_2(x,y,z) \\
  \end{array}
\right) \right)
\end{equation*}
where $H_1=k_x, H_2=k_y$ and the functions $k,f,g,m,\tau, c$ satisfy the following determining equations
\begin{equation*}
\begin{split}
\frac{1}{2}k_{xx}+\frac{1}{2}k_{yy}+k_{z}&=0\\
\frac{1}{2}f_{xx}+\frac{1}{2}f_{yy}+f_{z}+k_x&=0\\
\frac{1}{2}g_{xx}+\frac{1}{2}g_{yy}+g_{z}+k_y&=0\\
\frac{1}{2}m_{xx}+\frac{1}{2}m_{yy}+ m_{z}&= \tau \\
f_{x}&=\frac{1}{2}\tau\\
f_y&=c\\
g_x&=-c\\
g_y&=\frac 12 \tau\\
m_{x}&=0\\
m_y&=0.
\end{split}
\end{equation*}
 If we chose $k=c_1+c_2x+c_3y+ c_4(x^2+y^2-2z)$, we can solve the previous system and we find
 \begin{equation*}
\begin{split}
f(x,y,z)&=-2c_4xz+\frac 12 c_5x-c_2z+c_6 y +c_7\\
g(x,y,z)&= -2c_4yz+\frac 12 c_5y-c_3z-c_6 x +c_8\\
m(x,y,z)&= -2c_4z^2+c_5z+c_9\\
\tau(x,y,z)&= -4c_4z+c_5\\
c(x,y,z)&=c_6.
\end{split}
\end{equation*}
 Therefore, the infinitesimal symmetries for the two-dimensional Brownian motion associated with the  Doob function $k=c_1+c_2x+c_3y+ c_4(x^2+y^2-2z)$ are
\begin{equation*}
V_1=
\left( \left(
               \begin{array}{c}
                 -z \\
              0\\
                0\\
               \end{array}
             \right),
             \left(
               \begin{array}{cc}
                 0 & 0 \\
                 0 & 0\\
                               \end{array}
             \right),0,
             \left(
  \begin{array}{c}
    1\\
    0 \\
  \end{array}
\right) \right)
\end{equation*}

\begin{equation*}
V_2=
\left( \left(
               \begin{array}{c}
                 0 \\
              -z\\
                0\\
               \end{array}
             \right),
             \left(
               \begin{array}{cc}
                 0 & 0 \\
                 0 & 0\\
                               \end{array}
             \right),0,
             \left(
  \begin{array}{c}
    0\\
    1 \\
  \end{array}
\right) \right)
\end{equation*}

\begin{equation*}
V_3=
\left( \left(
               \begin{array}{c}
                 -2xz \\
              -2yz\\
                -2z^2\\
               \end{array}
             \right),
             \left(
               \begin{array}{cc}
                 0 & 0 \\
                 0 & 0\\
                               \end{array}
             \right),-4z,
             \left(
  \begin{array}{c}
    2x\\
    2y \\
  \end{array}
\right) \right)
\end{equation*}

\begin{equation*}
V_4=
\left( \left(
               \begin{array}{c}
                 \frac 12 x \\
              \frac 12 y\\
                z\\
               \end{array}
             \right),
             \left(
               \begin{array}{cc}
                 0 & 0 \\
                 0 & 0\\
                               \end{array}
             \right),1,
             \left(
  \begin{array}{c}
    0\\
    0 \\
  \end{array}
\right) \right)
\end{equation*}

\begin{equation*}
V_5=
\left( \left(
               \begin{array}{c}
                 y \\
              -x\\
                0\\
               \end{array}
             \right),
             \left(
               \begin{array}{cc}
                 0 & 1 \\
                 -1 & 0\\
                               \end{array}
             \right),0,
             \left(
  \begin{array}{c}
    0\\
    0 \\
  \end{array}
\right) \right)
\end{equation*}

\begin{equation*}
V_6=
\left( \left(
               \begin{array}{c}
                 1 \\
              0\\
                0\\
               \end{array}
             \right),
             \left(
               \begin{array}{cc}
                 0 & 0 \\
                 0 & 0\\
                               \end{array}
             \right),0,
             \left(
  \begin{array}{c}
    0\\
    0 \\
  \end{array}
\right) \right)
\end{equation*}

\begin{equation*}
V_7=
\left( \left(
               \begin{array}{c}
                 0 \\
              1\\
                0\\
               \end{array}
             \right),
             \left(
               \begin{array}{cc}
                 0 & 0 \\
                 0 & 0\\
                               \end{array}
             \right),0,
             \left(
  \begin{array}{c}
    0\\
    0 \\
  \end{array}
\right) \right)
\end{equation*}

\begin{equation*}
V_8=
\left( \left(
               \begin{array}{c}
                 0 \\
              0\\
                1\\
               \end{array}
             \right),
             \left(
               \begin{array}{cc}
                 0 & 0 \\
                 0 & 0\\
                               \end{array}
             \right),0,
             \left(
  \begin{array}{c}
    0\\
    0 \\
  \end{array}
\right) \right).
\end{equation*}
Using Theorem \ref{daSDEaPDE}, we can associate with any $V_i$ a corresponding symmetry of the Kolmogorov equation
\begin{equation}\label{Kolm2dim}
\frac 12 u_{xx}+\frac 12 u_{yy}+ u_{z}=0.
\end{equation}
In this way we recover, adding  the trivial symmetry $V_9$ corresponding to  $H_1= H_2=0$, the following generators for the well known nine-dimensional Lie algebra of Lie point symmetries of \eqref{Kolm2dim}
 \begin{equation*}
\begin{split}
\Xi_1&= z \frac {\partial}{\partial x}+xu\frac {\partial}{\partial u}\\
\Xi_2&= z \frac {\partial}{\partial y}+yu\frac {\partial}{\partial u}\\
\Xi_3&= 2xz \frac {\partial}{\partial x}+  2yz \frac {\partial}{\partial y}+2z^2 \frac {\partial}{\partial z}+ u(x^2+y^2-2z)\frac {\partial}{\partial u}\\
\Xi_4&=\frac 12 x \frac {\partial}{\partial x}+\frac 12 y \frac {\partial}{\partial y}+z\frac {\partial}{\partial z}\\
\Xi_5&= y\frac {\partial}{\partial x}-x \frac {\partial}{\partial y}\\
\Xi_6&= \frac {\partial}{\partial x}\\
\Xi_7&=  \frac {\partial}{\partial y}\\
\Xi_8&=  \frac {\partial}{\partial z}\\
\Xi_9&= u\frac {\partial}{\partial u}.\\
\end{split}
\end{equation*}
Let us now consider the determining equations for general symmetries (not necessarily of Doob type) of the two-dimensional Brownian motion. In this case the determining equations provided by Theorem \ref{theorem_determining} are
\begin{equation*}
\begin{split}
\frac{1}{2}f_{xx}+\frac{1}{2}f_{yy}+f_{z}+H_1&=0\\
\frac{1}{2}g_{xx}+\frac{1}{2}g_{yy}+g_{z}+H_2&=0\\
\frac{1}{2}m_{xx}+\frac{1}{2}m_{yy}+m_{z}&= \tau \\
f_{x}&=\frac{1}{2}\tau\\
f_y&=c\\
g_x&=-c\\
g_y&=\frac 12 \tau\\
m_{x}&=0\\
m_y&=0.
\end{split}
\end{equation*}
Here we have no conditions on the components of the vector field $H$ and we can find two infinite-dimensional  families of symmetries (each one depending on an arbitrary function of $z$) which are not of Doob type.

 The first family is the generalization of the almost Doob symmetries already found for the one-dimensional Brownian motion, i.e.
  \begin{equation*}
\tilde V_{\alpha}=
\left( \left(
               \begin{array}{c}
                 \frac 12 \alpha(z) x \\
               \frac 12 \alpha(z) y\\
                \int \alpha(z) \, dz\\
               \end{array}
             \right),
             \left(
               \begin{array}{cc}
                 0 & 0 \\
                 0 & 0\\
                               \end{array}
             \right),\alpha(z),
             \left(
  \begin{array}{c}
    -\frac 12 x \alpha'(z)\\
     -\frac 12 y \alpha'(z) \\
  \end{array}
\right) \right).
\end{equation*}

 We remark that, despite  $H$ satisfies the condition $H=\sigma^T \nabla k$ for $k=-\frac 14 \alpha'(z) (x^2+y^2)$, we cannot ensure that
 \begin{equation*}
\frac 12 k_{xx}+\frac 12 k_{yy}+ k_{z}=0
 \end{equation*}
 since this condition is satisfied only if $\alpha'(z)=0$. Therefore, since for general (non constant) functions $\alpha(z)$,
  $\tilde V_{\alpha}$ is not a Doob symmetry, Theorem \ref{daSDEaPDE} does  not apply and we cannot associate with  $\tilde V_{\alpha}$ any symmetries of the  Kolmogorov PDE \eqref{Kolm2dim}.

 The second family of non Doob-type  symmetries is given by
  \begin{equation*}
\tilde V_{\beta}=
\left( \left(
               \begin{array}{c}
                 \beta(z) y \\
              - \beta(z) x\\
                0\\
               \end{array}
             \right),
             \left(
               \begin{array}{cc}
                 0 & \beta(z) \\

                 -\beta(z) & 0\\
                               \end{array}
             \right),0,
             \left(
  \begin{array}{c}
    -y\beta'(z)\\
     x\beta'(z) \\
  \end{array}
\right) \right)
\end{equation*}
and does not have an analogous for the one-dimensional Brownian motion. In fact, in this case the vector field  $H$ does not satisfy the condition $H=\sigma^T \nabla k= \nabla k$ for any $k$.

\begin{remark}
The presence of the symmetries $\tilde V_{\beta}$ is related to the invariance with respect to random rotations of Brownian motion (see Section \ref{section_1}) in the same way as $\tilde V_{\alpha}$ is related to the invariance of Brownian motion with respect to random time changes (see Remark \ref{remark_timesymmetry}). Indeed if $B(t)$ is a smooth deterministic function  of time taking values in $O(2)$ we have that $\tilde{W}_t=B(t) \cdot W_t$ is a Brownian motion with respect to the new probability measure $\mathbb{Q}$ having density
\[\frac{\text{d}\mathbb{Q}_\mathcal{T}}{\text{d}\mathbb{P}_\mathcal{T}}=\exp \left(-\int_0^\mathcal{T}{W_s^T \cdot B'(s)^T \cdot dW_s}-\frac{1}{2} \int_0^\mathcal{T}{ |B'(s) \cdot W_s |^2 ds}\right).\]

\end{remark}

Therefore, the  introduction of the notion of stochastic transformation for an SDE allows us to enlarge the class of symmetries, showing that the stochastic approach is successful in providing  a wider family of symmetries which would be lost in a purely deterministic approach throughout the Kolmogorov equation associated with the SDE.

\section*{Acknowledgments}

We thank Giorgio Innocenti and Francesca De Masi for the discussions on the topic and the help with computations. The first author is supported by the German Research Foundation (DFG) via SFB 1060.

\bibliographystyle{plain}

\bibliography{doob2}

\end{document}